\def\BibTeX{{\rm B\kern-.05em{\sc i\kern-.025em b}\kern-.08em
    T\kern-.1667em\lower.7ex\hbox{E}\kern-.125emX}}
\newtheorem{theorem}{Theorem}  % > numérotation automatique des théorèmes
\newtheorem{proposition}{Proposition}
\newtheorem{corollary}{Corollary}
\newtheorem{ex}{Example}
\def\AE{\mathrm{A}}
\def\APA{\mathrm{B}}
\def\F{\mathrm{C}}
\def\C{\mathrm{D}}
\definecolor{rouge}{rgb}{1,0,0}
\definecolor{bleu}{rgb}{0,0, 1}
\newcommand{\Omit}[1]{}
\def\Univ{\mathcal{C}}
\def\muc{\mu^c}
\def\dpr{\hbox{\scriptsize \rm DPR}}
\title{Qualitative Integrals on Dragonfly Algebras\thanks{The first author acknowledges the support of Czech Science Foundation
through the grant 20-07851S. The second author has been supported by the ERDF/ESF project AI-Met4AI
No. CZ.02.1.01/0.0/0.0/17\textunderscore049/0008414.}}
\author{
 Anton\'{\i}n Dvo\v{r}\'ak \\
  Institute for Research and\\
{Applications of Fuzzy Modeling} \\
{University of Ostrava}\\
 Czech Republic\\
  \texttt{antonin.dvorak@osu.cz} \\
  %% examples of more authors
   \And
Michal Hol\v{c}apek \\
  Institute for Research and\\
{Applications of Fuzzy Modeling} \\
{University of Ostrava}\\
 Czech Republic\\
  \texttt{michal.holcapek@osu.cz} \\
  \And
 Agn\`es Rico \\
{Entrepôt Représentation et}\\
{Ingénierie des Connaissances}\\
{Universit\'e de Lyon} \\
France\\
\texttt{agnes.rico@univ-lyon1.fr}

  %% \AND
  %% Coauthor \\
  %% Affiliation \\
  %% Address \\
  %% \texttt{email} \\
  %% \And
  %% Coauthor \\
  %% Affiliation \\
  %% Address \\
  %% \texttt{email} \\
  %% \And
  %% Coauthor \\
  %% Affiliation \\
  %% Address \\
  %% \texttt{email} \\
}
\begin{document}
\maketitle
\begin{abstract}
In this paper, we investigate qualitative integrals (generalizations of Sugeno integral) acting on recently introduced Dragonfly algebras. These algebras are designed for applications in data analysis (based on fuzzy relational compositions) when some data are unknown (e.g., missing). Unknown data are represented by the additional dummy value. Definitions of operations on Dragonfly algebras follow lower estimation strategy, i.e., results of operations can be interpreted as lower estimations of results obtained when unknown values are replaced by known ones. We define qualitative integrals on Dragonfly algebras and show their monotonicity. We also prove results characterizing when these integrals return known (or unknown) values. We also present an illustrative example and hint directions of further research.\end{abstract}

% keywords can be removed
\keywords{Qualitative integrals \and Sugeno integral \and Dragonfly algebra \and Missing values}

\section{Introduction}
In many knowledge representation models, the question of expressing ignorance arises. In \cite{TID03}, the unknown value belongs to the set $\{-,0,+\}$ and it is the sign of the result of the sum between real values. 
In multi-criteria decision making, data are collections of pairs $(f, \alpha)$, where $f$ is a vector of the evaluations of the object according to criteria and $\alpha$ represents a global evaluation given for this object by an expert. 
We can found pieces of data in which some coordinates of $f$ are unknown, but $\alpha$ is known (see, for example, \cite{DDPRF15}).

Three-valued logics deal with a third truth value different from the standard true and false ones. 
Three-valued logics have been used for different purposes depending on the meaning of the third value. 
In Kleene Logic \cite{K52}, the third value is interpreted as unknown.
% Recently a {new approach} was proposed \cite{SCBBD19}, in which the unknown value is named ``undefined value''. The paper \cite{SCBBD19} presents also some extensions of the three-valued logic to many-valued logic. One particular extension is the Dragonfly algebra. 
Recently, a new approach was proposed \cite{SCBBD19} introducing so-called Dragonfly algebras. It adds a new element $*$ into the structure of truth values and interprets it as an unknown (e.g., missing) value. The primary field of application of Dragonfly algebras lies in fuzzy relational compositions, where it is important to follow a lower estimation approach in determining the resulting values of these compositions. The name ``Dragonfly'' refers to the successful real application  providing taxonomical classification of dragonflies.  

Considering $L$ a residuated lattice, and $*$ the unknown value, the Dragonfly algebra $L^* = L \cup \{*\}$ is a weaker algebraic structure than the original $L$. 
{In order to keep some important structural properties, e.g., associativity, we have to suppose that $L$ has no zero divisors.
%Adding some properties one obtains a particular case of Dragonfly algebra which is also a residuated lattice.
%A Dragonfly algebra with enough properties (for instance, associativity) 
With enough properties, $L^*$ can be used as an evaluation scale for objects or alternatives. In such a context, qualitative integrals are known to be suitable aggregation functions \cite{DH12,DPR16}. 
The qualitative integrals are defined using monotone set functions named capacities or fuzzy measures. 
The values of these set functions can happen to be unknown for some subsets of the universe.  Unfortunately, residuum-based qualitative integrals lose their monotonic properties in this case.
Nevertheless it is possible to characterize the cases for which these integrals return an unknown value.
Using an unknown value from a Dragonfly algebra we express the lower estimation of truth values we could guarantee for the result.} In some cases this approach can have some drawbacks. We will present this situation using data in which the expert gives a global evaluation for some  alternatives even if some local evaluations are unknown (see \cite{F} for more details). 
In this example, we suppose that the evaluation scale is a totally ordered set.
% and we add intervals to represent the unknown values. 
% So we are going to define qualitative integrals for interval-valued functions. 
Motivated by the results obtained for this data, we outline possible approaches to be pursued in further research.

The paper is structured as follows. In Section~\ref{s.prel.}, we introduce residuated lattices and qualitative integrals on them. In the central Section~\ref{s.int_on_DA}, we first present the definition of Dragonfly algebras and their basic properties. Then we define multiplication-based and residuum-based qualitative integrals on Dragonfly algebras and show their properties. Section~\ref{s.example_unknown_values} contains an example with data representing local and global evaluations of objects, where some local evaluations are missing. We demonstrate the ability of qualitative integrals on Dragonfly algebras to aggregate these data with missing values. %Finally, in Section~\ref{s.concl} we provide conclusions and directions of further research. 

\section{Preliminaries}\label{s.prel.}
This section presents residuated lattices as the basic structure of truth values for modeling the values of input vectors and the multiplication and residuum based qualitative integrals of such input vectors. Later, the residuated lattices will be extended to the Dragonfly algebras.  
 
\subsection{The structure of truth values}

The structure of truth values is a linearly ordered residuated lattice, i.e.  the algebraic structure $(L, \wedge, \vee, \otimes, \to,  0, 1) $ such that 
 $(L, \wedge, \vee, 0,1)$ is a linearly ordered lattice with the bottom element $0$ and the top element $1$,
 $(L,\otimes, 1)$ is a commutative 
 monoid, and the adjointness property for $\otimes$ and $\to$
is satisfied, i.e.,  
%\vspace{-0.5cm}
\begin{gather} \label{eq.adjointness}
 \alpha \otimes \beta \leq \gamma \hbox{ iff } \alpha \leq \beta \to \gamma,
 \end{gather}
 where $\leq$ is the lattice ordering. The operations $\otimes$ and $\to$ are called the \emph{multiplication} and \emph{residuum}. Note that for any $\alpha,\beta \in L$, it holds that $\alpha\leq \beta$ if and only if $ \alpha\to \beta =1$.  
 Define $\neg \alpha=\alpha\to 0$, the negation of $\alpha$ for any $\alpha\in L$. 
 
 %For the purpose of this paper, we will consider the linearly ordered residuum lattices defined on real numbers from the interval $[0, 1]$, so $L$ will denote either a finite subset of $[0, 1]$ such that $0$ and $1$ belong to $L$, or the whole interval.  For simplicity, we will not mention that a residuated lattice is  linearly ordered.
 
For the purpose of this paper, we will consider the linearly ordered residuated lattices defined on real numbers from an interval $[a, b]$ ($a<b$), so $L$ will denote either a finite subset of $[a, b]$ such that $a$ and $b$ belong to $L$, or the whole interval. If $a$ and $b$ are not specified, the interval $[0,1]$ is assumed.  For simplicity, we will not mention that a residuated lattice is linearly ordered.

%  \Antonin{It is not clear above whether $0, 1$ in $(L, \wedge, \vee, 0,1)$ denote real numbers or general top and bottom elements. I propose to understand it as general elements, but then it is necessary to specify $L$ in examples using $[0, 1]$.}
 
% \Antonin{I also think that it will be more transparent to have the same denotation for operations from the same algebra. But if we use $\to_{KD}$ for Kleene-Dienes system, it can be confusing for some readers, since by Kleene-Dienes implication is usually understood this operation: $\to_{KD}(a, b) = \max(1 - a, b)$. So I propose to call it ``G\"odel system'' (or algebra) and use $\otimes_G$ and $\to_G$. I changed $\to_{Go}$ to $\to_p$ for product algebra.}

% \Agnes{I'm agree I have changed}
% \Antonin{Thank you!}

 \begin{ex}\label{ex1} Let us present some well-known adjoint pairs of multiplications and residua:
 %conjunctions and implications: 
 \begin{itemize}
 \item God\"el system: $ \alpha \otimes_{G} \beta = \min(\alpha, \beta)$; 
  $ \alpha \to_{G} \beta = 1 \mbox{ if } \alpha \leq \beta \mbox{ and }  \beta 
  \mbox{ otherwise}.$ 
 \end{itemize}
For the following two systems, let $L = [0, 1]$.
\begin{itemize}
\item {\L}ukasiewicz system: $\alpha \otimes_L \beta = \max( 0, \alpha + \beta -1)$; 
 $\alpha \to_L \beta = \min( 1-\alpha+ \beta,1)$.
 \item Product system: $ \alpha \otimes_p \beta = \alpha \cdot\beta$; 
 $ \alpha \to_p \beta = \min( 1, \frac{\beta}{\alpha}) $ if $ \alpha \neq 0 $ and $1$ otherwise.
 \end{itemize}
 We have 
 $\neg_{L} \alpha= 1-\alpha $, 
 $\neg_{G} \alpha = \neg_p \alpha = 1$ if $\alpha=0$ and $0$ otherwise.
 \end{ex}
%\Michal{Please, check that divisibility and law of double negation are superfluous and we could remove them to save a space. } 
We say that 
% $L$ is  \emph{divisible} if $\alpha\otimes(\alpha\to \beta)=\alpha\wedge \beta$ holds for any $\alpha,\beta\in L$, 
%, and that it satisfies the \emph{law of double negation} if $\neg\neg a=a$ for any $a \in L$.  Moreover, 
that $L$ \emph{has no zero-divisors wrt. $\otimes$ } if it holds that if $\alpha\otimes \beta =0$ for $\alpha,\beta\in L$, then $\alpha=0$ or $\beta=0$. It is known that if $L$ has no zero divisors wrt. $\otimes$, then $\neg$ is \emph{strict}, i.e., $\neg \alpha =1$, if $\alpha=0$, and $\neg \alpha =0$, otherwise. Indeed, assume that $\alpha>0$ and $\neg\alpha>0$, then $\alpha \otimes \neg \alpha \le 0$ as a consequence of the adjointness property. But this is impossible, since $\alpha$ and $\neg\alpha$ would be zero divisors. 

%A divisible residuated  lattice satisfying the law of double negation is called MV-algebra. 

%Throughout this paper we will use the following property satisfied in each residuated lattice 
%\begin{equation}
%    (a\otimes b ) \to c = a \to( b \to c).
%\end{equation} 
%\Michal{If we remove divisibility, we have to repair the following example.}

\begin{ex}\label{ex2}
$(L,\wedge,\vee, \otimes_{G}, \to_G, 0, 1)$ is a finite or infinite commutative residuated lattice without zero divisors, where $\wedge$ and $\otimes_G$ coincide;\\
$([0, 1],\wedge,\vee, \otimes_{L}, \to_{L}, 0, 1)$ is an infinite residuated lattice with zero divisors;\\
$([0, 1] ,\wedge,\vee, \otimes_{p}, \to_p, 0, 1)$ is an infinite residuated lattice without zero divisors;
\end{ex}

\subsection{Qualitative integrals for input vectors with known values} \label{subsec.qi}

Let $L$ be a residuated lattice, let $\Univ =\{1, \cdots,n \}$ be a finite non-empty universe, and let $f : \Univ \to L$ denote an input vector. 
The qualitative integrals are aggregation operators defined using a capacity  (or fuzzy measure) $\mu: \mathcal{P}(\Univ) \to L$ (where $\mathcal{P}(\Univ)$ denotes the power set of $\Univ$), which is a monotonically non-decreasing set function such that $\mu(\emptyset)=0$ and $\mu(\Univ)=1$. For the purpose of this paper we use the qualitative integrals 
based on $\otimes $ or $\to$ introduced by Dubois, Prade and Rico in  \cite{DPR16} as follows:
\begin{align*}
\int_{\dpr,\mu}^{\otimes} f&= \bigvee_{A \subseteq \Univ} \big(\mu(A) \otimes \bigwedge_{i \in A} f_i \big)\\
\int_{\dpr,\mu}^{\to} f & = \bigwedge_{A\subseteq \Univ}  \big( \muc(A) \to \bigvee_{i \in A} f_i\big),
\end{align*}
where $\muc(A)=\neg \mu(\Univ\setminus A)$ for any $A\subseteq\Univ$.
It should be noted that these integrals were introduce to generalize the Sugeno integral in the framework of Kleene-Dienes system  $(\otimes_{KD},\to_{KD})$, where $\otimes_{KD}=\otimes_{G}$, but  $\to_{KD}$ is given by $\alpha \to_{KD} \beta = \max( 1-\alpha, \beta)$. Moreover,  similar qualitative integrals were introduced by Dvo\v{r}\'{a}k and Hol\v{c}apek in \cite{DH12}. A comparison of both types of qualitative integral can be found in \cite{HR20}. 
%These integrals are known to be a generalisation of the Sugeno integral which is defined using $(\otimes_{KD},\antt{\to_{KD}})$ where $\to_{KD}$ is defined by $\alpha \to_{KD} \beta = \max( 1-\alpha, \beta)$.
%\Agnes{From my point of view Sugeno integral is defined with $\to_{KD}$}
%\Antonin{But $\to_{KD}$ has not been defined above.}

%It should be noted that similar qualitative integrals were introduced by Dvo\v{r}\'{a}k and Hol\v{c}apek in \cite{DH12}. A comparison of both types of qualitative integral can be found in \cite{HolcapekRico:fuzzieee20}. 

\section{Qualitative integrals on Dragonfly algebras}\label{s.int_on_DA}
% \section{Qualitative integrals for input vectors with unknown values interpreted in Dragonfly algebras}
\subsection{Dragonfly algebras}
Dragonfly algebras were introduced in \cite{SCBBD19} to define an appropriate structure to process data with unknown or missing values. The main idea of Dragonfly algebra is to extend the residuated lattice by one extra value denoted as $\star$ that represents an unknown value.  The extension is proposed to keep as many structural properties of residuated lattices as possible.

Let $L$ be a linearly ordered residuated lattice, and $L^* = L \cup \{*\}$. Denote $L^+ = L \setminus \{0\}$ and $\tilde{L} = L \setminus \{0, 1\}$. A Dragonfly algebra (determined by $L$) is an algebraic structure $L^*=(L^*, \wedge_D,\vee_D,\otimes_D,\rightarrow_D, 0, 1)$  with the operations defined in Tab.~\ref{tab.dfo}, where $\alpha, \beta \in \tilde{L}$.
\begin{table}[ht]
\centering{\begin{tabular}{c|c|c|c|c}
$\otimes_D$ & $\beta$ & $*$ & $0$ & $1$ \\
\hline 
$\alpha$ & $\alpha \otimes \beta$ & $*$ & $0$ & $\alpha$ \\
\hline 
$*$ & $*$ & $*$ & $0$ & $*$ \\
\hline 
$0$ & $0$ & $0$ & $0$ & $0$ \\
\hline 
$1$ & $\beta$ & $*$ & $0$ & $1$ 
\end{tabular}\quad
\begin{tabular}{c|c|c|c|c}
$\vee_D$ & $\beta$ & $*$ & $0$ & $1$ \\
\hline 
$\alpha$ & $\alpha \vee \beta$ & $\alpha$ & $\alpha$ & $1$ \\
\hline 
$*$ & $\beta$ & $*$ & $*$ & $1$ \\
\hline 
$0$ & $\beta$ & $*$ & $0$ & $1$ \\
\hline 
$1$ & $1$ & $1$&$1$ & $1$  
\end{tabular}}
\vspace{0.3cm}

\centering{\begin{tabular}{c|c|c|c|c}
$\wedge_D$ & $\beta$ & $*$ & $0$ & $1$ \\
\hline 
$\alpha$ & $\alpha \wedge \beta$ & $*$ & $0$ & $\alpha$ \\
\hline 
$*$ & $*$ & $*$ & $0$ & $*$ \\
\hline 
$0$ & $0$ & $0$ & $0$ & $0$ \\
\hline 
$1$ & $\beta$ & $*$ & $0$ & $1$ 
\end{tabular}\quad
\begin{tabular}{c|c|c|c|c}
$\to_D$ & $\beta$ & $*$ & $0$ & $1$ \\
\hline 
$\alpha$ & $\alpha \to \beta$ & $*$ & $\neg \alpha$ & $1$ \\
\hline 
$*$ & $\beta$ & $1$ & $*$ & $1$ \\
\hline 
$0$ & $1$ & $1$ & $1$ & $1$ \\
\hline 
$1$ & $\beta$ & $*$ &$0$ & $1$  
\end{tabular}\\[3pt]
}
\caption{Dragonfly algebra operations.}\label{tab.dfo}
\vspace{-0.5cm}
\end{table}
\noindent
%where \antt{$\alpha, \beta \in \tilde{L}$.} \antt{$\wedge_D$ is defined by the same table as $\otimes_D$, the only difference being that $\alpha \wedge_D \beta = \alpha \wedge \beta$.} 

Let us define $\neg_{\scriptscriptstyle{D}} \alpha = \alpha \to_D 0$. We should note that there are two orderings considered on a Dragonfly algebra: the first ordering $\leq$ is motivated by the \emph{lower estimation principle} used in designing the algebra and is defined as follows: $\alpha \leq \beta$ in $L^*$ iff $\alpha \leq \beta$ in $L$, or $\alpha = 0$ and $\beta = *$, or $\alpha = *$ and $\beta = 1$. Note that $*$ is not comparable to any $\alpha \in \tilde{L}$, hence, $\le$ is a partial ordering even if $L$ is linearly ordered. The second ordering $\leq_\ell$ is induced by one of the operations $\vee_D$, $\wedge_D$, i.e., $\alpha\le_\ell \beta$ if $\alpha\wedge_D\beta=\alpha$ or equivalently $\alpha\vee_D\beta=\beta$. Hence, one can simply find that $0 <_\ell * <_\ell \alpha$ for all $\alpha \in L^+$, and $\le_\ell$ is a linear ordering on $L^*$. %Let us consider the ordering $\leq_\ell$ that fulfils the following: $0 <_\ell * <_\ell \alpha$ for all $\alpha \in L^+$. 
The adjointness property~(\ref{eq.adjointness}) is not fulfilled neither for the ordering $\leq$, nor for $\leq_\ell$. For example, we have $\star\otimes_D \star \not\le_\ell 0$, but $\star\le_\ell \star\to_D 0$, or $\star\otimes_D 1 \le_\ell \beta$ for $\star<_\ell\beta<_\ell 1$, but $1\not\le_\ell \star\to_D \beta=\beta$. Hence, $L^*$ is not a residuated lattice (see \cite{SCBBD19} for details).  By Lemma~10 in \cite{SCBBD19}, the multiplication $\otimes_D$ is still monotonically increasing in both arguments, but the residuum $\to_D$ does not meet monotonic properties. Indeed, let $\alpha = * = \beta$ and $\gamma \in \tilde{L}$, i.e., $\gamma \in  L \setminus \{0, 1\}$. Then, $* = \beta \le_\ell \gamma$, but $1 = * \to_D * = \alpha \to_D \beta \not\leq_\ell \alpha \to_D \gamma = * \to_D \gamma = \gamma$. 

Obviously, $L$ can be embedded to $L^*$, but in contrast to the residuated lattice $L$, the Dragonfly algebra $L^*$ is a weaker algebraic structure without additional assumptions on~$L$. For example, $(L^*,\wedge_D,\vee_D)$ is a lattice in our case when $L$ is linearly ordered, but $(L^*,\otimes_D,1)$ is not a monoid even in this case. Indeed, if $\alpha\otimes \beta=0$ holds for $\alpha,\beta\in L$ such that $\alpha\neq 0\neq \beta$, that is  $\alpha$ and $\beta$ are \emph{zero divisors wrt.~$\otimes$}, then $\otimes_D$ is not associative, i.e.,  $(\alpha\otimes_D \beta)\otimes_D *=0\otimes_D *=0\neq *=\alpha\otimes_D *=\alpha\otimes_D (\beta\otimes_D *)$.  The following result is a consequence of a proposition proved in \cite{SCBBD19}.

\begin{proposition}
 Let $L^*$ be a Dragonfly algebra. If the underlying residuated lattice $L$ has no zero divisors wrt. $\otimes$, then $(L^*,\odot_D,1)$ is a monoid. 
\end{proposition}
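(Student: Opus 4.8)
The plan is to check the three monoid axioms for $(L^*, \otimes_D, 1)$ directly against Table~\ref{tab.dfo}, the only substantial one being associativity, and to pin down the single place where the no-zero-divisor hypothesis is actually needed. Closure is immediate from the table; the row and column indexed by $1$ reproduce the other argument, so $1$ is a neutral element (and, as the table is symmetric, $\otimes_D$ is in fact commutative); and the row and column indexed by $0$ are identically $0$, so $0$ is absorbing. Thus everything reduces to establishing $(\alpha \otimes_D \beta) \otimes_D \gamma = \alpha \otimes_D (\beta \otimes_D \gamma)$ for all $\alpha, \beta, \gamma \in L^*$.

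First I would perform a reduction. If one of $\alpha, \beta, \gamma$ equals $1$, neutrality collapses both sides to the $\otimes_D$-product of the remaining two; if one of them equals $0$, both sides equal $0$ by absorption. So it suffices to treat $\alpha, \beta, \gamma \in \tilde{L} \cup \{*\} = L^* \setminus \{0,1\}$. The key consequence of the hypothesis is that $\tilde{L}$ is closed under $\otimes$: for $\xi, \eta \in \tilde{L}$ we have $\xi \otimes \eta \neq 0$ (no zero divisors) and $\xi \otimes \eta \le \xi \wedge \eta < 1$ (since $\otimes \le \wedge$ in any residuated lattice), hence $\xi \otimes \eta \in \tilde{L}$. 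Now two cases. If $\alpha, \beta, \gamma \in \tilde{L}$, then $\alpha \otimes \beta$ and $\beta \otimes \gamma$ stay in $\tilde{L}$, so in Table~\ref{tab.dfo} both outer products are again read off the $\tilde{L}\times\tilde{L}$ cell, and the identity becomes $(\alpha \otimes \beta) \otimes \gamma = \alpha \otimes (\beta \otimes \gamma)$, i.e. associativity of $\otimes$ in $L$. If instead at least one argument is $*$, I claim both sides equal $*$: from the table, $x \otimes_D * = * \otimes_D x = *$ for every $x \in \tilde{L} \cup \{*\}$, and any inner $\otimes_D$-product of two elements of $\tilde{L} \cup \{*\}$ again lies in $\tilde{L} \cup \{*\}$ (using the closure of $\tilde{L}$ under $\otimes$ when both are in $\tilde{L}$); a short walk through the subcases according to which of $\alpha, \beta, \gamma$ are $*$ then shows that the outer product is $*$ on both sides.

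The main obstacle is not the bookkeeping — it is realising that the whole weight of the hypothesis sits in exactly one subcase of the second case, namely $\alpha, \beta \in \tilde{L}$ and $\gamma = *$. There the left-hand side is $(\alpha \otimes_D \beta) \otimes_D * = (\alpha \otimes \beta) \otimes_D *$, which equals $*$ precisely because $\alpha \otimes \beta \neq 0$; if $L$ had zero divisors one could arrange $\alpha \otimes \beta = 0$, making the left-hand side $0 \neq * = \alpha \otimes_D (\beta \otimes_D *)$ — this is exactly the failure of associativity exhibited just before the proposition. (In the all-$\tilde{L}$ case one checks, by the same argument using associativity in $L$, that the two sides still agree even when a product vanishes, so the hypothesis is genuinely needed only at that one spot.) Apart from isolating this point, the proof is routine table verification; alternatively, as the authors indicate, one may simply cite the corresponding statement of \cite{SCBBD19}.
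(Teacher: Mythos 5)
Your proof is correct, but note that the paper itself does not prove this proposition at all: it is stated as ``a consequence of a proposition proved in \cite{SCBBD19}'', so the authors simply defer to the reference, whereas you supply a self-contained verification. Your route is the natural direct one: read closure, commutativity, neutrality of $1$ and absorption of $0$ off Table~\ref{tab.dfo}, reduce associativity to arguments in $\tilde{L}\cup\{*\}$, and use the no-zero-divisor hypothesis exactly where it matters, namely to show $\tilde{L}$ is closed under $\otimes$ (via $\xi\otimes\eta\le\xi\wedge\eta<1$ and $\xi\otimes\eta\neq 0$), so that an inner product of two elements of $\tilde{L}\cup\{*\}$ never drops to $0$ and any triple involving $*$ evaluates to $*$ on both sides. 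This correctly mirrors the counterexample the paper exhibits just before the proposition, where a zero-divisor product next to $*$ breaks associativity. One small imprecision: the hypothesis is not needed in only the single subcase $\alpha,\beta\in\tilde{L}$, $\gamma=*$, but also in its mirror $\alpha=*$, $\beta,\gamma\in\tilde{L}$ (where the right-hand side would become $*\otimes_D 0=0$); since you established commutativity of $\otimes_D$ at the outset and your general closure argument covers both sides, this does not affect the validity of the proof, only the phrasing ``exactly one spot'', which should read ``one spot up to the symmetry of $\otimes_D$''. What your argument buys over the paper's citation is an explicit, checkable identification of where the assumption on $L$ enters; what the citation buys is brevity and consistency with \cite{SCBBD19}, where the general algebraic properties of $L^*$ are developed.
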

% \begin{proof}
% By Lemma 2 in \cite{SCBBD19} and the assumption on  non-existence of zero divisors wrt. $\otimes$ in $L$, the associativity of $\otimes_D$ is fulfilled.
% %and 
% Hence, $(L^*,\otimes_M,1)$ is a commutative monoid.
% \end{proof}

%Note that it is sufficient to require that $\otimes$ has no zero divisors, since in this case $\wedge$ cannot have zero divisors too, because $\alpha \otimes \beta \leq \alpha \wedge \beta$ holds in any residuated lattice.

%{\color{blue}
%\begin{ex}
%$(L^*,\wedge,\vee,\otimes_{KD}, \to_G)$ 
%and $(L^*,\wedge,\vee,\otimes_{p}, \to_p)$ are residuated %lattices but
%$(L^*,\wedge,\vee,\otimes_{L}, \to_{L})$ are not residuated %lattices. 
%\antcom{In fact, none of them is a residuated lattice, %because adjointness is not fulfilled.}
%\Agnes{ $\to_G$ is not the residuation of $\otimes_{KD}$ ? %}
%\Antonin{It is in $L$, but not in $L^*$, where adjointness %does not work, as is written a few lines above.}
%\end{ex}
%}
%When
%$L$ is boolean, i.e., $L=\{0,1\}$, $L^*$ is the set of truth values associated to a three-valued logical system.  
%The operations of the Dragonfly algebra restricted to the boolean case is the same as in the {\L}ukasiewicz three-valued logic with $L = \{0, \frac{1}{2}, 1\}$, where the operation $\wedge$ is the minimum and $\vee$ is the maximum. 
%In the many-valued logics, the truth values are in a totally ordered scale with a top denoted by $1$ and a bottom denoted by $0$. 

For the Dragonfly algebra $L^*$, we have $\alpha\otimes \beta\neq *$ for any $\alpha,\beta\in L$, because $\alpha\otimes \beta\in L$ by the definition, and $\bigwedge_{D,i} a_i>_\ell*$ for any  sequence of $a_i \in L^+$, where $\bigwedge_D$ (and similarly $\bigvee_D$) denotes the infimum (supremum) in the Dragonfly algebra with respect to the lattice ordering. 
%{\color{red} I do not understand the meaning of the following sentence:} So using a Dragonfly algebra 
%provides a lower estimation of guaranteed truth values.
%{\color{blue} when we use $*$ we replace the unknown value by the minimum of all values differents from $0$ present in the scale.}

%There are other definitions for the operations on $L^*$ (see \cite{BN15} for more details): for example, for the Bochvar-style connectives, the unknown value is an annihilator; for the Soboci\'nski-style connectives, the unknown value is treated as a neutral element ignored by every element in $L$ whenever possible. 

\subsection{Qualitative integrals on Dragonfly algebras}
Let $L^*$ be a Dragonfly algebra with the underlying residuated lattice that has no zero divisors wrt. $\otimes$. The qualitative integrals for input vectors with unknown values will be defined as a straightforward extensions of qualitative integrals introduced in Subsection~\ref{subsec.qi} where the values of input vectors belong to $L^*$. For our analysis, we also admit a capacity assigning the unknown value to a subset $A\subset \Univ$.  
Precisely, let $\Univ =\{1, \cdots,n \}$ be a finite non-empty universe,  and let $f : \Univ \to L^*$ be an input vector admitting the unknown value $\star$. 
Let $\mu:\mathcal{P}(\Univ) \to L^* $ be a fuzzy measure. The qualitative integrals based on $\otimes_D$ or $\to_D$ are defined as follows:
\begin{align*}
\int_{\dpr,\mu}^{\otimes_D} f&= \bigvee_{D,A \subseteq \Univ} \big(\mu(A) \otimes_D\bigwedge_{D,i \in A} f_i\big),\\
\int_{\dpr,\mu}^{\to_D} f & = \bigwedge_{D,A\subseteq \Univ}  \big( \muc(A) \to_D \bigvee_{D,i \in A} f_i\big),
\end{align*}
where $\muc(A) =\mu(\Univ\setminus A)\to_D 0$. 

Expressing an unknown value in the Dragonfly algebra, if $f$ is an input vector with unknown values, then $f$ provides a lower estimation of any input vector $g$ obtained from $f$ by replacing the unknown values by arbitrary non-zero values of $L$. 
Therefore, $f$ represents a sort of pessimistic attitude, and the qualitative integrals of $f$ are lower estimations of the qualitative integrals for all possible input vectors $g$ with known non-zero values, which is a consequence of the following theorems. 
\begin{theorem}\label{th.mono_integ}
Let $f\le_\ell g$ be two input vectors. Then
\begin{align*}
    \int_{\dpr,\mu}^{\otimes_D} f\le_\ell \int_{\dpr,\mu}^{\otimes_D} g 
    %\hbox{ and }
    %\int_{\dpr,\mu}^{\to_D} f\le_\ell %\int_{\dpr,\mu}^{\to_D} g.
\end{align*}
\end{theorem}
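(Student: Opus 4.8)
The plan is to establish the inequality by a routine monotone-composition argument. I would write the integral as a composition of three maps, each of which is non-decreasing with respect to $\le_\ell$: for a fixed $A\subseteq\Univ$, the formation of the finite infimum $x\mapsto\bigwedge_{D,i\in A}x_i$; the left multiplication $x\mapsto \mu(A)\otimes_D x$ by the fixed element $\mu(A)$; and finally the formation of the finite supremum $\bigvee_{D,A\subseteq\Univ}$ over all subsets of $\Univ$. Since $L$ is linearly ordered, $\le_\ell$ is a linear order on $L^*$, so the operators $\bigwedge_D$ and $\bigvee_D$ occurring in the definition of $\int_{\dpr,\mu}^{\otimes_D}$ are simply the minimum and the maximum in this chain, which makes the monotonicity claims elementary.

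First I would fix $A\subseteq\Univ$. From $f\le_\ell g$, i.e.\ $f_i\le_\ell g_i$ for every $i$, and the monotonicity of the minimum in a chain, I get $\bigwedge_{D,i\in A}f_i\le_\ell\bigwedge_{D,i\in A}g_i$ (for $A=\emptyset$ both sides equal the top element $1$, so this is trivial). Next I would invoke Lemma~10 of \cite{SCBBD19}, which states that $\otimes_D$ is monotonically non-decreasing in both arguments with respect to $\le_\ell$; applying it to the second argument with the fixed first argument $\mu(A)$ gives
\[
\mu(A)\otimes_D\bigwedge_{D,i\in A}f_i \;\le_\ell\; \mu(A)\otimes_D\bigwedge_{D,i\in A}g_i .
\]
Since this holds for every $A\subseteq\Univ$, and the maximum of a finite family is monotone with respect to $\le_\ell$, taking the supremum over all $A$ yields
\[
\int_{\dpr,\mu}^{\otimes_D}f \;=\; \bigvee_{D,A\subseteq\Univ}\!\Big(\mu(A)\otimes_D\!\!\bigwedge_{D,i\in A}f_i\Big) \;\le_\ell\; \bigvee_{D,A\subseteq\Univ}\!\Big(\mu(A)\otimes_D\!\!\bigwedge_{D,i\in A}g_i\Big) \;=\; \int_{\dpr,\mu}^{\otimes_D}g .
\]

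I do not foresee a genuine obstacle here; the only delicate point is bookkeeping about which order is in play. All the monotonicity used is with respect to $\le_\ell$ rather than the lower-estimation order $\le$, and one must check that Lemma~10 of \cite{SCBBD19} indeed covers the cases in which one or both arguments of $\otimes_D$ equal the unknown value $\star$ (it does), and that $\mu(A)$ may itself equal $\star$ without affecting the argument (it may, since only the monotonicity of $x\mapsto\mu(A)\otimes_D x$ is used, not any property linking $\mu(A)$ to a numerical value). By contrast, the very same scheme breaks down for $\int_{\dpr,\mu}^{\to_D}$, because $\to_D$ is not monotone, as the counterexample with $\alpha=\beta=\star$ in the preceding subsection shows; this is exactly why the theorem is stated only for the $\otimes_D$-based integral.
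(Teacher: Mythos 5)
Your proposal is correct and uses exactly the same key fact as the paper's proof, namely the monotonicity of $\otimes_D$ in its second argument with respect to $\le_\ell$ (Lemma~10 of \cite{SCBBD19}), combined with the routine monotonicity of the finite infimum and supremum in the chain $(L^*,\le_\ell)$; the paper simply states this in one line, while you spell out the bookkeeping. No gap.
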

\begin{proof}
The  inequality immediately follows from $\alpha\otimes_D \beta\le_\ell \alpha\otimes_D \gamma$ that holds for any $\alpha,\beta,\gamma\in L^*$ such that $\beta\le_\ell \gamma$ (see Lemma~10 in \cite{SCBBD19}). %To prove the second inequality, it is sufficient to show that  $\alpha\to_D \beta\le_\ell \alpha\to_D \gamma$ holds for any $\alpha,\beta,\gamma\in L^*$ such that $\beta\le_\ell \gamma$. We know that if the underlying residuated lattice has no zero divisors, then $\neg$ is strict. The previous inequality was verified in Lemma~13 in \cite{SCBBD19} for the strict negation $\neg \alpha=\alpha\to 0$ for $\alpha\in L$. \antcom{Actually, in Lemma~13 in \cite{SCBBD19}, it is proved that $\alpha\to_D \beta\le_\ell \alpha\to_D \gamma$ holds for any $\alpha,\beta,\gamma\in L^*$ such that $\beta\le \gamma$ (the non-lattice-like ordering). But it should hold for $\le_\ell$, too, for a strict $\neg$.}
\end{proof}

The monotonicity property is not true for $\int_{\dpr, \mu} ^ {\to_D} $ if the capacity is the unknown value for a certain subset, as the following example shows.

%If $\mu$ contains unknown values, then $\int_{\dpr,\mu}^{\to_D}$ is not increasing according to the integrated function. This is a consequence of non monotonicity of the residuum $\to_D$ on the dragonfly algebra. 
 
\begin{ex}
Let $L^*$ be a Dragonfly algebra such that there exists $\alpha \in L^*$ with $\star<_\ell \alpha<_\ell 1$. Consider $\Univ=\{1,2\}$ and the capacity $\mu$ defined as   
$\mu(\emptyset)=0$, $\mu(\{1\})=\mu(\{2\})=\star$,  $\mu(\{1,2\})=1$. Let $f, g:\Univ\to L^*$ be given by 
$f_1=\star$, $f_2=1$, $g_1=\alpha$ for $\star<\alpha<1$ and $g_2=1$. Obviously, $f<_\ell g$. It is easy to see that  $\mu^c(A)=\neg_{\scriptscriptstyle{D}} \mu(\Univ\setminus A)=\mu(A)$ for any  $A\subseteq \Univ$. Then
\begin{gather*}
    \int_{\dpr,\mu}^{\to_D} f = (\star\to_D \star) \wedge_D (\star\to_D 1) \wedge_D (1\to_D 1) =1 >_\ell\\ (\star\to_D \alpha) \wedge_D (\star\to_D 1) \wedge_D (1\to_D 1) = \alpha = \int_{\dpr,\mu}^{\to_D} g, 
\end{gather*}
which violates the monotonicity of the residuum based qualitative integral. 
\end{ex}

\begin{theorem}
Let $f\le_\ell g$ be two input vectors, and  the capacity $\mu$ takes values only in $L$. Then 
 \begin{gather*}
    \int_{\dpr,\mu}^{\to_D} f\le_\ell \int_{\dpr,\mu}^{\to_D} g.
\end{gather*}
 \end{theorem}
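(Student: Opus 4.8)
The plan is to establish the inequality separately for every term $\muc(A)\to_D\bigvee_{D,i\in A} f_i$ indexed by $A\subseteq\Univ$ in the definition of $\int_{\dpr,\mu}^{\to_D}$, and then to pass to the infimum $\bigwedge_{D,A\subseteq\Univ}$. The observation that makes this work is that, under the hypothesis that $\mu$ takes values only in $L$, the dual capacity $\muc(A)=\mu(\Univ\setminus A)\to_D 0$ takes only the two values $0$ and $1$. Indeed, $\mu(\Univ\setminus A)\in L$; if $\mu(\Univ\setminus A)=0$ then $\muc(A)=0\to_D 0=1$; if $\mu(\Univ\setminus A)=1$ then $\muc(A)=1\to_D 0=0$; and if $\mu(\Univ\setminus A)=\delta\in\tilde L$ then $\muc(A)=\delta\to_D 0=\neg\delta=0$, where the last equality uses that $L$ has no zero divisors wrt.\ $\otimes$ (the standing assumption of this subsection), so $\neg$ is strict, as recalled in Section~\ref{s.prel.}.

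With this in hand I would proceed in three steps. First, fix $A\subseteq\Univ$: since $f_i\le_\ell g_i$ for every $i\in A$ and $\vee_D$ is the join of the lattice $(L^*,\wedge_D,\vee_D)$ whose order is $\le_\ell$, monotonicity of the join gives $\bigvee_{D,i\in A} f_i\le_\ell\bigvee_{D,i\in A} g_i$; abbreviate these as $\beta_A\le_\ell\gamma_A$. Second, I would check that $x\mapsto(\muc(A)\to_D x)$ is $\le_\ell$-monotone using $\muc(A)\in\{0,1\}$: if $\muc(A)=0$ then $0\to_D x=1$ for all $x\in L^*$ by the definition of $\to_D$ in Tab.~\ref{tab.dfo}, so $\muc(A)\to_D\beta_A=1=\muc(A)\to_D\gamma_A$; if $\muc(A)=1$ then $1\to_D x=x$ for all $x\in L^*$, so $\muc(A)\to_D\beta_A=\beta_A\le_\ell\gamma_A=\muc(A)\to_D\gamma_A$. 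Hence $\muc(A)\to_D\bigvee_{D,i\in A} f_i\le_\ell\muc(A)\to_D\bigvee_{D,i\in A} g_i$ for every $A$. Third, since $\bigwedge_D$ is the meet for the linear order $\le_\ell$ and is therefore monotone, taking the infimum over all $A\subseteq\Univ$ of the left-hand sides and of the right-hand sides yields $\int_{\dpr,\mu}^{\to_D} f\le_\ell\int_{\dpr,\mu}^{\to_D} g$.

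The only genuine obstacle is the first observation, the reduction $\muc(A)\in\{0,1\}$, and it is precisely there that the hypothesis ``$\mu$ takes values only in $L$'' is indispensable: if $\muc(A)$ were allowed to equal $\star$, then $\star\to_D x$ is not $\le_\ell$-monotone in $x$ (for $\star<_\ell\alpha<_\ell 1$ one has $\star\to_D\star=1\not\le_\ell\alpha=\star\to_D\alpha$), so the per-term inequality would break down, and indeed the preceding example exhibits exactly such a failure for the integral itself.
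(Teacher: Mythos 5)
Your proof is correct, but it establishes the crucial per-term monotonicity by a different route than the paper. The paper's proof rests on the lemma that $\alpha\to_D\beta\le_\ell\alpha\to_D\gamma$ holds for \emph{every} fixed $\alpha\in L$ and all $\beta\le_\ell\gamma$ in $L^*$, verified by a short case analysis on where $\star$ sits (the cases $\beta=0<_\ell\star=\gamma$ and $\beta=\star<_\ell\gamma<_\ell 1$), with the no-zero-divisors hypothesis entering through $\alpha\to_D 0=\neg\alpha=0$ for $\alpha\in\tilde{L}$; the passage to joins and to the outer infimum is left implicit. You instead exploit the hypothesis that $\mu$ is $L$-valued together with strictness of $\neg$ (the same no-zero-divisors assumption, used at a different spot) to conclude $\muc(A)\in\{0,1\}$ for all $A$, after which monotonicity of $x\mapsto\muc(A)\to_D x$ is immediate from the operation table, since $0\to_D x=1$ and $1\to_D x=x$; you then make the $\bigvee_D$/$\bigwedge_D$ monotonicity steps explicit. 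Your route is shorter and makes transparent exactly where the hypothesis on $\mu$ is indispensable (it forces the dual capacity to be Boolean-valued, and your closing remark correctly ties the failure for $\muc(A)=\star$ to the paper's counterexample), whereas the paper's route isolates a reusable structural fact about $\to_D$ on Dragonfly algebras — second-argument monotonicity for any known first argument — which does not depend on the particular definition of $\muc$ and also underlies the paper's subsequent remark about three-valued $L^*$. Both arguments are valid under the standing assumption of the subsection.
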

 
 \begin{proof}
The inequality is a straightforward consequence of $\alpha \to_D \beta \leq \alpha \to_D \gamma $ for any $\alpha \in L$ and  $\beta,\gamma\in L^*$ such  $\beta \leq \gamma$, which holds in Dragonfly algebras. Indeed, the inequality trivially holds for $\alpha\in \{0,1\}$ and $\gamma=1$. If $\alpha\not\in \{0,1\}$, then it is sufficient to verify a) $\beta=0<_\ell * =\gamma$ and b) $\beta=*<_\ell \gamma<_\ell 1$, the rest follows from the monotonicity of $\to_D$ on the underlying residuated lattice $L$. For a), we have $\alpha\to_D 0=0<_\ell *= \alpha \to_D *$, and for b), we have $\alpha\to *=*<_\ell \gamma\le_\ell \alpha\to_D \gamma$.
 \end{proof}
 
 Note that the above proposition works for capacities admitting unknown values and three-valued $L^*$, i.e., $L^* = \{0, *, 1\}$, since then the property in concern holds. Furthermore, the previous theorems are not true without the additional assumption saying that the underlying residuated lattice has no zero divisors.

 %{\color{blue} Let us prove that $\alpha \to_D \beta \leq \alpha \to_D \gamma $ for all $\beta \leq \gamma.$ ( The case $\beta = \gamma $ is trivial.)
 
 %First note that if $\alpha = 0$ both of them are equal to 1. So let us suppose that 
 %$\alpha \neq 0$.
 
% If $\beta =0 $ and $\gamma = *$ then
%$\alpha \to_D 0 \in \{ \neg \alpha = 0 , *, 1, 0\}$
% and $\alpha \to_D * \in \{ *, 1, 1, *\}$ the sets are writing in the same order according to the values of $\alpha$. %So the inequality is satisfied. 
 
% If $\beta =* $ and $\gamma \neq 0,*$ then
% $\alpha \to_D * \in \{ * , *, 1, *\}$
% and $\alpha \to_D \gamma \in \{ \alpha \to \gamma , \gamma, 1, \gamma \}$ the sets are writing in the same order %according to the values of $\alpha$. So the inequality is satisfied. 
 
% If $\beta \neq 0, * $ and $\gamma \neq 0,*$ with $\beta < \gamma $ then 
% $\alpha \to_D \beta  \in \{ \beta , \beta, 1, \beta\}$
% and $\alpha \to_D \gamma \in \{\gamma , \gamma, 1, \gamma \}$ the sets are writing in the same order according to the %values of $\alpha$. So the inequality is satisfied. 
 
% }

%Note that the assumption $g(i)>\star$ for any $i=1,\dots,n$ cannot be avoided in general in the previous theorem, because if there is $\alpha\in L^*$ such that $\alpha>_\ell\star$ such that $\neg \alpha >\star$, then $\alpha\to_D 0=\neg\alpha>_\ell \star= \alpha \to_D\star$, although $0<_\ell \star$. Hence, one can simply construct a counterexample. 
An interesting question is, when the qualitative integrals return a known value and when an unknown value. The following theorems specify the cases in which the multiplication-based qualitative integrals return a known value. 

\begin{theorem}
\label{intdpr*}
%$\int_{\dpr,\mu}^{\otimes_D} f\neq *$  if and only if %$\mu(\{i\mid f_i>0\})=0$ or $\mu(\{i\mid f_i> *\})>*$.
$\int_{\dpr,\mu}^{\otimes_D} f=0$ if and only if $\mu(\{i\mid f_i>_\ell0\})=0$.
\end{theorem}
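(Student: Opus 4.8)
The plan is to introduce $S = \{i \in \Univ \mid f_i >_\ell 0\}$ and reduce the statement to two elementary facts about the Dragonfly operations. First, $0$ is the $\le_\ell$-least element of $L^*$, so a finite $\bigvee_D$ equals $0$ exactly when every one of its arguments equals $0$. Second, since the underlying residuated lattice $L$ has no zero divisors wrt.\ $\otimes$, one has $\alpha\otimes_D\beta=0$ in $L^*$ if and only if $\alpha=0$ or $\beta=0$; this is read off Table~\ref{tab.dfo}, whose only entries equal to $0$ lie in the row and the column of $0$ (the entry $\alpha\otimes\beta$ for $\alpha,\beta\in\tilde L$ being nonzero is precisely the no-zero-divisor hypothesis). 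I would also record that for a nonempty $A$, $\bigwedge_{D,i\in A} f_i = 0$ iff $f_j=0$ for some $j\in A$: as $\le_\ell$ is linear the infimum is the minimum, which is $0$ when some $f_j=0$ and otherwise lies in $\{*\}\cup L^+$, hence is $>_\ell 0$.

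For the ``if'' direction, assume $\mu(S)=0$ and fix $A\subseteq\Univ$; I claim $\mu(A)\otimes_D\bigwedge_{D,i\in A} f_i = 0$. If $A$ contains some $j$ with $f_j=0$, then $\bigwedge_{D,i\in A} f_i = 0$ and the term collapses to $0$ since $\alpha\otimes_D 0 = 0$ for all $\alpha$. Otherwise every $f_i$ with $i\in A$ satisfies $f_i>_\ell 0$, so $A\subseteq S$ (this also covers $A=\emptyset$, where $\mu(\emptyset)=0$ directly); monotonicity of $\mu$ together with $0$ being $\le_\ell$-least then forces $\mu(A)=0$, and again the term is $0$. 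Since every argument of the supremum is $0$, $\int_{\dpr,\mu}^{\otimes_D} f = 0$.

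For the ``only if'' direction I would argue contrapositively. Suppose $\mu(S)\neq 0$; then $\mu(S)>_\ell 0$ and, since $\mu(\emptyset)=0$, $S\neq\emptyset$. By the infimum remark above, $\bigwedge_{D,i\in S} f_i >_\ell 0$. Applying the no-zero-divisor fact with $\alpha=\mu(S)$ and $\beta=\bigwedge_{D,i\in S} f_i$ gives $\mu(S)\otimes_D\bigwedge_{D,i\in S} f_i >_\ell 0$. As this is the argument of the $\bigvee_D$ indexed by $A=S$, we obtain $\int_{\dpr,\mu}^{\otimes_D} f \ge_\ell \mu(S)\otimes_D\bigwedge_{D,i\in S} f_i >_\ell 0$, so the integral is not $0$.

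All the computation here is routine; the two points that need care are the empty-set/empty-infimum edge case and the explicit use of the standing assumption on $L$ in the step $\alpha\otimes_D\beta=0 \implies \alpha=0 \text{ or } \beta=0$, which is exactly where ``no zero divisors'' is indispensable: a zero-divisor pair in $L$ would make the $A=S$ term vanish even though $\mu(S)\neq 0$, breaking the ``only if'' direction.
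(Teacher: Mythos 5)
Your proof is correct and takes essentially the same route as the paper's: when $\mu(\{i\mid f_i>_\ell 0\})=0$ every term of the supremum vanishes (splitting on whether $A$ contains an index with $f_j=0$ or $A\subseteq\{i\mid f_i>_\ell 0\}$), and otherwise the term indexed by $A=\{i\mid f_i>_\ell 0\}$ is $>_\ell 0$, with the no-zero-divisor hypothesis on $L$ playing the same role in both arguments. Your version merely spells out the edge cases (the empty set, the exact characterization of when $\alpha\otimes_D\beta=0$) that the paper's terser argument leaves implicit.
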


\begin{proof}
($\Rightarrow$) %First, let us prove that  $\int_{\dpr,\mu}^{\otimes_D} f=0$ if and only if $\mu(\{i\mid f_i>_\ell 0\})=0$. 
Let $\int_{\dpr,\mu}^{\otimes_D} f=0$ and assume that $\mu(\{i\mid f_i>_\ell 0\})>_\ell 0$. Put $X=\{i\mid f_i{>_\ell}0\}$. Then $\bigwedge_{D,i \in X} f_i\geq_\ell *$ and $\mu(X)\geq_\ell *$, which implies $\int_{\dpr,\mu}^{\otimes_D} f\geq_\ell *>_\ell0$, a contradiction. 

($\Leftarrow$) If $\mu(\{i\mid f_i>_\ell 0\})=0$, then $\mu(A)=0$ is true for any  $A\subseteq \{i\mid f_i>_\ell 0\}$. Hence, even if $\bigwedge_{D,i \in A} f_i>_\ell0$ for a certain subset $A$, we find that $\mu(A)\otimes\bigwedge_{D,i \in A} f_i=0$. As a consequence of this we find that $\int_{\dpr,\mu}^{\otimes_D} f=0$. 
\end{proof}

\begin{theorem}\label{intdpr**}
$\int_{\dpr,\mu}^{\otimes_D} f>_\ell *$ if and only if $\mu(\{i\mid f_i>_\ell *\})>*$.
\end{theorem}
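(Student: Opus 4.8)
The plan is to reduce the statement to the individual summands $t_A := \mu(A)\otimes_D\bigwedge_{D,i\in A}f_i$ of the integral $\int_{\dpr,\mu}^{\otimes_D}f=\bigvee_{D,A\subseteq\Univ}t_A$, after describing exactly when a $\otimes_D$-product, and when a $\bigwedge_D$, lies strictly above $*$ in the linear order $\le_\ell$. Throughout write $X=\{i\mid f_i>_\ell *\}$, i.e.\ the set of indices on which $f$ takes a known nonzero value; note $X\subseteq\Univ$ need not be all of $\Univ$. I would first record two auxiliary facts read off Tab.~\ref{tab.dfo}. (a) For $\alpha,\beta\in L^*$, one has $\alpha\otimes_D\beta>_\ell *$ iff $\alpha>_\ell *$ and $\beta>_\ell *$; the delicate direction is that for $\alpha,\beta\in\tilde L$ the product $\alpha\otimes\beta$ again lies in $L^+$, and this is exactly where the hypothesis that $L$ has no zero divisors wrt.\ $\otimes$ enters, while every other table entry containing a $0$ or a $*$ evaluates to $0$ or $*$. (b) Since $\bigwedge_D$ is the infimum with respect to the linear order $\le_\ell$, for every $A\subseteq\Univ$ we have $\bigwedge_{D,i\in A}f_i>_\ell *$ iff $f_i>_\ell *$ for all $i\in A$, i.e.\ iff $A\subseteq X$ (this is the content of the remark that $\bigwedge_{D,i}a_i>_\ell *$ for every sequence $a_i\in L^+$; for $A=\emptyset$ the empty infimum is $1>_\ell *$ and $\emptyset\subseteq X$, so the statement is still correct). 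Combining (a) and (b): $t_A>_\ell *$ iff $A\subseteq X$ and $\mu(A)>_\ell *$.

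With this in hand the direction $(\Leftarrow)$ is immediate: if $\mu(X)>_\ell *$, then since trivially $X\subseteq X$ we get $t_X>_\ell *$, and hence $\int_{\dpr,\mu}^{\otimes_D}f=\bigvee_{D,A}t_A\ge_\ell t_X>_\ell *$. For $(\Rightarrow)$ I would argue by contraposition: assume $\mu(X)\le_\ell *$, so $\mu(X)\in\{0,*\}$, and show $t_A\le_\ell *$ for every $A\subseteq\Univ$; then the supremum over the finite, linearly $\le_\ell$-ordered family $\{t_A\}$ is also $\le_\ell *$. If $A\subseteq X$, then monotonicity of $\mu$ gives $\mu(A)\le\mu(X)$, hence $\mu(A)\in\{0,*\}$ (for both the ordering $\le$ and the ordering $\le_\ell$ the only elements $\le *$ are $0$ and $*$), so $t_A\le_\ell *$ by fact (a). If $A\not\subseteq X$, pick $j\in A$ with $f_j\le_\ell *$; then $\bigwedge_{D,i\in A}f_i\le_\ell f_j\le_\ell *$, so again $t_A=\mu(A)\otimes_D\bigwedge_{D,i\in A}f_i\le_\ell *$ by fact (a). This exhausts all $A$ and completes the contrapositive.

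The main obstacle is fact (a), specifically its ``known factors stay known'' half: the no-zero-divisor assumption is precisely what prevents $\mu(A)\otimes_D\bigwedge_{D,i\in A}f_i$ from collapsing to $0$ when both factors lie in $L^+$, and without it the direction $(\Leftarrow)$ already fails (this is the same degeneracy noted for associativity of $\otimes_D$ earlier in the paper). Everything else is a finite case check against Tab.~\ref{tab.dfo} together with the linearity of $\le_\ell$, which lets infima and suprema over finite families be treated as minima and maxima so that the case split passes through them cleanly. One could alternatively package (a) and (b) simultaneously by observing that the ``level'' map $L^*\to\{0,*,1\}$ sending every element of $L^+$ to $1$ commutes with $\otimes_D$, $\wedge_D$ and $\vee_D$, which reduces the claim to a Sugeno-type integral on the three-element chain; but the direct argument sketched above is shorter and keeps the role of the no-zero-divisor hypothesis explicit.
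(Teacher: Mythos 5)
Your proof is correct and follows essentially the same route as the paper's: the necessity direction rests on the observation that a term $\mu(A)\otimes_D\bigwedge_{D,i\in A}f_i>_\ell *$ forces $\mu(A)>_\ell *$ and $A\subseteq\{i\mid f_i>_\ell *\}$ (then monotonicity of $\mu$), and the sufficiency direction uses the no-zero-divisor hypothesis to keep $\mu(X)\otimes_D\bigwedge_{D,i\in X}f_i$ strictly above $*$. Your only deviations — stating the term-level characterization as an explicit equivalence, arguing the forward direction by contraposition, and handling the $A\not\subseteq X$ and $A=\emptyset$ cases explicitly — are a more detailed write-up of the same argument, not a different method.
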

\begin{proof}
%Further, let us prove that $\int_{\dpr,\mu}^{\otimes_D} f>_\ell *$ if and only if $\mu(\{i\mid f_i> *\})>_\ell*$.
($\Rightarrow$) If $\int_{\dpr,\mu}^{\otimes_D} f>_\ell *$, then there exists $A$ such that $\mu(A)\otimes \bigwedge_{D,i \in A} f_i >_\ell*$, which implies $\mu(A)>_\ell*$. Hence, we obtain that $\mu(\{i\mid f_i>_\ell *\})>_\ell*$ as a consequence of the monotonicity of $\mu$ and the fact that $A\subseteq \{i\mid f_i>_\ell *\}$. 

($\Leftarrow$) If $\mu(\{i\mid f_i>_\ell *\})>_\ell*$, then  $\bigwedge_{D,i \in \{i\mid f_i>_\ell *\}} f_i >_\ell*$, which implies $ \mu(\{i\mid f_i>_\ell *\})\otimes \bigwedge_{D,i \in \{i\mid f_i> *\}} f_i >_\ell*,$
where we used the assumption that the underlying residuated lattice has no zero divisors.
\end{proof}

To summarize, the multiplication-based qualitative integrals  return a value in $L$ if and only if there is at least one value known for the input vector $f$ and if the capacity of the set of criteria with known evaluations is known. 

\begin{ex}
If the capacity $\mu$ is ``absolutely'' unknown, i.e., $\mu(A) = * $, if $\emptyset\neq A \neq \Univ $, $\mu(\emptyset)= 0$ and $\mu(\Univ)= 1$, then 
%we have 
$\int_{\dpr,\mu}^{\otimes_D} f= \bigwedge_{D,i \in \Univ} f_i$ for any input vector with known values such that $f_i=0$ for all $i\in \Univ$ or $f_i>0$ for all $i\in \Univ$. If there are $f_i=0$ and $f_j>0$ for certain $i,j\in \mathcal{C}$, then  $\int_{\dpr,\mu}^{\otimes_D} f =\star$ as a consequence of Theorem~\ref{intdpr*} (see Corollary~\ref{cor.intdprspec*} below). %Indeed, for any $\emptyset\neq A \neq \Univ$, we find that $\mu(A)\otimes \bigwedge_{D,i\in A} f_i\le_\ell \star$, where the equality occurs if $f_i>0$ for all $i\in A$. Hence, we simply find that the value of $\int_{\dpr,\mu}^{\otimes_D} f$ is $\star$ if $f_i=0$ and $f_j>0$ for certain $i,j\in \mathcal{C}$, or $\bigwedge_{D,i \in \Univ} f_i$ if $f_i=0$ for all $i\in \Univ$ or $f_i>0$ for all $i\in \Univ$.
\end{ex}

\begin{ex}
We consider $\Univ=\{1,2,3\}$, $f=(f_1,f_2,f_3)=(*,*,\alpha)$\, where $\alpha \in L^+$.
\begin{itemize}
    \item Let us suppose that
$\mu(A) =* $ if $\{3\} \subseteq A$, $A \neq \Univ$, $\mu(\Univ)=1$ and $\mu(A) =0$ otherwise. \\
 We have $\int_{\dpr,\mu}^{\otimes_D} f= \vee_D(*, \alpha \otimes_D *, * \otimes_D * )= *.$
 \item Let us suppose $\mu(A) = \alpha $ if $\{3\} \subseteq A$, $A \neq \Univ$, $\mu(\Univ)=1$ and $\mu(A) =0$ otherwise.\\ 
 We have $\int_{\dpr,\mu}^{\otimes_D} f= \vee_D(*, \alpha \otimes_D \alpha, * \otimes_D \alpha )= \alpha\otimes_D \alpha.$ 
 %\antcom{Since, e.g. for the product algebra, $\alpha \otimes_D \alpha$ need not be equal to $\alpha$.}
 \end{itemize}
\end{ex}

Using Theorems~\ref{intdpr*} and \ref{intdpr**}, we can specify the cases when the multiplication-based integrals return the unknown value.  
\begin{theorem}\label{thm.intdpr*}
$\int_{\dpr,\mu}^{\otimes_D} f = *$ if and only if $\mu(\{i\mid f_i>_\ell *\}) \le_\ell *$, $\mu(\{i\mid f_i\ge_\ell *\}) \ge_\ell*$ and   $f_i>_\ell 0$ for at least one $i\in \Univ$. 
\end{theorem}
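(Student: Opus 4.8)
The plan is to characterize when $\int_{\dpr,\mu}^{\otimes_D} f = *$ by eliminating the other two possibilities, namely the integral equals $0$ or the integral is a value $>_\ell *$. Since $\le_\ell$ is a linear ordering on $L^*$ with $0 <_\ell * <_\ell \alpha$ for all $\alpha \in L^+$, every element of $L^*$ is either $0$, equal to $*$, or $>_\ell *$; so $\int_{\dpr,\mu}^{\otimes_D} f = *$ holds exactly when $\int_{\dpr,\mu}^{\otimes_D} f \ne 0$ and $\int_{\dpr,\mu}^{\otimes_D} f \not>_\ell *$. The first step is therefore to invoke Theorem~\ref{intdpr*}: the negation of $\int_{\dpr,\mu}^{\otimes_D} f = 0$ is equivalent to $\mu(\{i \mid f_i >_\ell 0\}) \ne 0$, i.e. $\mu(\{i \mid f_i >_\ell 0\}) \ge_\ell *$. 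The second step is to invoke Theorem~\ref{intdpr**}: the negation of $\int_{\dpr,\mu}^{\otimes_D} f >_\ell *$ is equivalent to $\mu(\{i \mid f_i >_\ell *\}) \le_\ell *$.

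Combining these, $\int_{\dpr,\mu}^{\otimes_D} f = *$ is equivalent to the conjunction of $\mu(\{i \mid f_i >_\ell 0\}) \ge_\ell *$ and $\mu(\{i \mid f_i >_\ell *\}) \le_\ell *$. It then remains to match this with the stated form. For the condition $\mu(\{i \mid f_i \ge_\ell *\}) \ge_\ell *$ together with ``$f_i >_\ell 0$ for at least one $i$'': note that $\{i \mid f_i \ge_\ell *\} = \{i \mid f_i >_\ell 0\}$, since in $L^*$ the elements that are $\ge_\ell *$ are precisely those that are $>_\ell 0$ (as $* >_\ell 0$ and the ordering is linear). Hence $\mu(\{i \mid f_i \ge_\ell *\}) \ge_\ell *$ is literally the same as $\mu(\{i \mid f_i >_\ell 0\}) \ge_\ell *$. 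One small point must be handled: if no $f_i >_\ell 0$, then $\{i \mid f_i >_\ell 0\} = \emptyset$, so $\mu$ of it is $0 \not\ge_\ell *$; thus requiring $\mu(\{i \mid f_i >_\ell 0\}) \ge_\ell *$ already forces at least one $f_i >_\ell 0$, which shows the extra clause ``$f_i >_\ell 0$ for at least one $i \in \Univ$'' is implied — I would still state it explicitly, as the authors do, since it makes the characterization transparent and is harmless.

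The remaining work is purely bookkeeping: write out the chain ``$\int_{\dpr,\mu}^{\otimes_D} f = *$ iff ($\int \ne 0$ and $\int \not>_\ell *$) iff ($\mu(\{i \mid f_i >_\ell 0\}) \ge_\ell *$ and $\mu(\{i \mid f_i >_\ell *\}) \le_\ell *$) iff the three stated conditions'', citing Theorems~\ref{intdpr*} and~\ref{intdpr**} at the appropriate spots and the identity $\{i \mid f_i \ge_\ell *\} = \{i \mid f_i >_\ell 0\}$. I do not anticipate a genuine obstacle here; the only thing to be careful about is the trichotomy argument (that $\le_\ell$ being linear with bottom $0$ and atom $*$ makes ``$= *$'' equivalent to ``$\ne 0$ and $\not>_\ell *$''), and the translation between the ``$>_\ell 0$'' and ``$\ge_\ell *$'' descriptions of the support set, both of which are immediate from the structure of $L^*$ recalled earlier in the paper.
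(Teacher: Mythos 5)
Your proposal is correct and follows essentially the same route as the paper: reduce ``$=*$'' to ``$\ne 0$ and $\not>_\ell *$'' by linearity of $\le_\ell$, translate via Theorems~\ref{intdpr*} and~\ref{intdpr**}, use the identity $\{i\mid f_i\ge_\ell *\}=\{i\mid f_i>_\ell 0\}$, and note that $\mu(\emptyset)=0$ forces the existence of some $f_i>_\ell 0$. No gaps; your observation that the third clause is redundant but harmless is consistent with the paper's own argument.
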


\begin{proof}
($\Rightarrow$) By Theorem~\ref{intdpr*}, we find that  $\mu(\{i\mid f_i>0\}) \not= 0$ and simultaneously $\mu(\{i\mid f_i> *\}) \not>_\ell *$. Since $\mu(\emptyset)=0$, we immediately obtain that there exists $f_i>_\ell0$ for certain $i\in \Univ$. %Moreover, we have $\mu(\{i\mid f_i>_\ell 0\})>_\ell 0$. 
Moreover, we simply have $\mu(\{i\mid f_i>_\ell 0\})=\mu(\{i\mid f_i\ge_\ell *\}) \ge_\ell *>_\ell0$ and $\mu(\{i\mid f_i>_\ell *\}) \le_\ell *$. 

($\Leftarrow$) Assume that $\int_{\dpr,\mu}^{\otimes_D} f \neq *$.  Then $\int_{\dpr,\mu}^{\otimes_D} f >_\ell *$ or 
$\int_{\dpr,\mu}^{\otimes_D} f = 0$. By Theorem~\ref{intdpr*},  we find that
 $\mu(\{i\mid f_i>_\ell * \})>_\ell *$ or $\mu(\{i\mid f_i>_\ell 0\}) = \mu(\{i\mid f_i \geq_\ell *\})=0$
%$\mu(\{i\mid f_i>_\ell 0\})=0$ or $\mu(\{i\mid f_i>_\ell 0\})>_\ell *$,
which is a contradiction, therefore, $\int_{\dpr,\mu}^{\otimes_D} f = *$.
\end{proof}

The following corollary characterizes when the multiplication-based qualitative integral provide the unknown value as its result if an input vector is such that all its values are known. 
\begin{corollary}\label{cor.intdprspec*} Let $f$ be an input vector such that $f_i\in L$ for any $i=1,\dots, n$. Then
$\int_{\dpr,\mu}^{\otimes_D} f = *$ if and only if $\mu(\{i\mid f_i>_\ell *\}) = *$ and   $f_i>_\ell 0$ for at least one $i\in \Univ$. 
\end{corollary}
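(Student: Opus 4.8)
The plan is to derive this corollary directly from Theorem~\ref{thm.intdpr*} by specializing to the case where the input vector $f$ takes all its values in $L$, so that the dummy value $\star$ never appears among the $f_i$. First I would observe that for such an $f$, the two index sets appearing in Theorem~\ref{thm.intdpr*} collapse onto each other: since no $f_i$ equals $\star$, the condition $f_i >_\ell \star$ is equivalent to $f_i \in L^+$ (i.e. $f_i >_\ell 0$), and the condition $f_i \ge_\ell \star$ is \emph{also} equivalent to $f_i >_\ell 0$, because in $L^*$ the only element that is $\ge_\ell \star$ but not $>_\ell \star$ is $\star$ itself, which is excluded. Hence $\{i \mid f_i >_\ell \star\} = \{i \mid f_i \ge_\ell \star\} = \{i \mid f_i >_\ell 0\}$; call this common set $X$.

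With this identification, the three conditions in Theorem~\ref{thm.intdpr*} become: $\mu(X) \le_\ell \star$, $\mu(X) \ge_\ell \star$, and $f_i >_\ell 0$ for at least one $i$. The first two together say exactly $\mu(X) = \star$, since $\le_\ell$ is a linear ordering on $L^*$ and $\star$ is the unique element that is simultaneously $\le_\ell \star$ and $\ge_\ell \star$. So the conjunction of the three hypotheses of Theorem~\ref{thm.intdpr*} is, in the all-known case, equivalent to ``$\mu(\{i \mid f_i >_\ell \star\}) = \star$ and $f_i >_\ell 0$ for at least one $i$'', which is precisely the right-hand side of the corollary. Thus both directions follow at once by invoking Theorem~\ref{thm.intdpr*}.

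I would write the proof essentially as: ``This is Theorem~\ref{thm.intdpr*} applied to an input vector $f$ with $f_i \in L$ for all $i$. Since no $f_i$ equals $\star$, we have $\{i \mid f_i >_\ell \star\} = \{i \mid f_i \ge_\ell \star\} = \{i \mid f_i >_\ell 0\}$, and the two conditions $\mu(\cdot) \le_\ell \star$ and $\mu(\cdot) \ge_\ell \star$ on this set are jointly equivalent to $\mu(\cdot) = \star$.'' No computation beyond these order-theoretic observations is needed.

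Honestly, there is no real obstacle here; the only thing to be careful about is the order-theoretic bookkeeping, namely making sure that $\ge_\ell \star$ really does force membership in $X$ for a known value (this uses that $0 <_\ell \star <_\ell \alpha$ for all $\alpha \in L^+$, which is recorded in the discussion of $\le_\ell$ in the preliminaries on Dragonfly algebras) and that the two weak inequalities combine to an equality because $\le_\ell$ is linear. One might also want to note explicitly that the degenerate case where $f_i = 0$ for all $i$ is automatically excluded by the clause ``$f_i >_\ell 0$ for at least one $i$'', so there is no separate subcase to handle.
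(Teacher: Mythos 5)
Your proposal is correct and is essentially the paper's own argument: the corollary is deduced from Theorem~\ref{thm.intdpr*} by noting that, when all $f_i\in L$, the sets $\{i\mid f_i>_\ell *\}$ and $\{i\mid f_i\ge_\ell *\}$ coincide, so the two inequalities on $\mu$ collapse to $\mu(\{i\mid f_i>_\ell *\})=*$. Your extra order-theoretic bookkeeping (including the identification with $\{i\mid f_i>_\ell 0\}$) only spells out what the paper leaves implicit.
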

\begin{proof}
It immediately follows from Theorem~\ref{thm.intdpr*} and the fact that $\{i\mid f_i>_\ell *\}=\{i\mid f_i\ge_\ell *\}$.
\end{proof}

Obviously, if the capacity is known everywhere and is non-zero for the set of all criteria which have known and non-zero evaluation, then the multiplication-based qualitative integrals always give a global evaluation in $L^+$. Indeed, by the assumption on $\mu$ we have $\mu(\{i\mid f_i>_\ell\star\})>_\ell *$, which implies $\int_{\dpr,\mu}^{\otimes_D} f>_\ell *$ due to Theorem~\ref{intdpr**}. 

The next two theorems specify the cases in which the residuum-based qualitative integrals return a known value. 
%This evaluation is the lower estimation we could guarantee at any case when unknown values were determined because qualitative integrals are increasing functions.   

%We can also consider the qualitative integrals defined with the %residuum
%$$\int_{\dpr,\mu}^{\to_D} f= \bigwedge_{D,A}  ( \muc(A) \to %\bigvee_{D,i \in A} f_i). $$

\begin{theorem}\label{thm.5}
$\int_{\dpr,\mu}^{\to_D} f= 0$  if and only if $\mu^c(\{i\mid f_i=0\})=1$.
\end{theorem}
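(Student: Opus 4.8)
The plan is to reduce the statement to a finite case analysis on Table~\ref{tab.dfo} together with the monotonicity of $\muc$. Since $0$ is the least element of the linear order $\le_\ell$ and $\int_{\dpr,\mu}^{\to_D} f$ is a finite meet $\bigwedge_{D,A\subseteq\Univ}$ of the summands $\muc(A)\to_D\bigvee_{D,i\in A}f_i$, the integral equals $0$ if and only if at least one summand equals $0$. So the first step is to characterize when $x\to_D y = 0$ in the Dragonfly algebra. Reading off the residuum table and using that $L$ has no zero divisors --- hence $\neg$ is strict, so that $\alpha\to_D 0 = \neg\alpha = 0$ for every $\alpha\in\tilde{L}$, while $\alpha\to_D\beta\ge_\ell\beta>_\ell 0$ whenever $\beta\in\tilde{L}$ and every entry with second argument $*$ or $1$ is nonzero --- one finds that $x\to_D y = 0$ holds precisely when $x\in L^+$ and $y = 0$.

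Next I would record the behaviour of $\muc$. Because $\muc(A) = \mu(\Univ\setminus A)\to_D 0 = \neg_D\mu(\Univ\setminus A)$ and, again by the no-zero-divisor assumption, $\neg_D$ maps $0$ and $*$ to $1$ and all of $L^+$ to $0$, the set function $\muc$ takes only the values $0$ and $1$; in particular $\muc(\emptyset) = \neg_D\mu(\Univ) = \neg_D 1 = 0$. Moreover $\muc$ is $\le_\ell$-monotone: if $A\subseteq B$ then $\Univ\setminus B\subseteq\Univ\setminus A$, so $\mu(\Univ\setminus B)\le_\ell\mu(\Univ\setminus A)$ by monotonicity of the fuzzy measure, and $\neg_D$ is $\le_\ell$-antitone, whence $\muc(A)\le_\ell\muc(B)$.

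Putting these together: a summand $\muc(A)\to_D\bigvee_{D,i\in A}f_i$ equals $0$ iff $\muc(A)\in L^+$ and $\bigvee_{D,i\in A}f_i = 0$; since $\muc$ is two-valued the first condition is just $\muc(A) = 1$, and since $0$ is the $\le_\ell$-bottom the second is $A\subseteq\{i\mid f_i = 0\}$. Hence $\int_{\dpr,\mu}^{\to_D}f = 0$ if and only if there is a set $A\subseteq\{i\mid f_i = 0\}$ with $\muc(A) = 1$. For the ``if'' direction of the theorem it then suffices to take $A = \{i\mid f_i = 0\}$ (which is automatically nonempty because $\muc(\emptyset) = 0$); for the ``only if'' direction, from $\muc(A) = 1$ and $A\subseteq\{i\mid f_i=0\}$ the monotonicity of $\muc$ forces $\muc(\{i\mid f_i=0\}) = 1$.

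The only genuinely delicate point is the exhaustive inspection of the $\to_D$ table, and in particular the observation that the no-zero-divisor hypothesis makes both $\neg$ and $\neg_D$ two-valued: this is exactly what collapses the a priori weaker condition ``$\muc(A)\in L^+$'' to ``$\muc(A) = 1$'' and makes the clean statement with the value $1$ correct. One must also remember to treat $A = \emptyset$ separately, where $\muc(\emptyset)\to_D\bigvee_{D,i\in\emptyset}f_i = 0\to_D 0 = 1 \neq 0$, so the empty set never produces a vanishing summand.
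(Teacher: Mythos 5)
Your overall route is the same as the paper's: since $\le_\ell$ is linear with bottom $0$, the integral vanishes iff some summand $\muc(A)\to_D\bigvee_{D,i\in A}f_i$ vanishes; you then characterize vanishing summands ($\muc(A)$ must lie in $L^+$ and all $f_i$ with $i\in A$ must be $0$) and transfer to $A=\{i\mid f_i=0\}$ by monotonicity of $\muc$, exactly as in the paper. However, there is one concrete error in your justification: you claim that $\neg_{\scriptscriptstyle{D}}$ sends $\star$ to $1$, so that $\muc$ is two-valued. Reading the residuum table, $\star\to_D 0=\star$, i.e.\ $\neg_{\scriptscriptstyle{D}}\star=\star$, and the paper's proof states this explicitly; consequently, when the capacity takes the unknown value (which the setting expressly allows), $\muc$ is three-valued with range $\{0,\star,1\}$, not $\{0,1\}$. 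Your closing remark that the no-zero-divisor hypothesis makes $\neg_{\scriptscriptstyle{D}}$ two-valued is therefore false: strictness of $\neg$ only guarantees $\neg_{\scriptscriptstyle{D}}\alpha=0$ for $\alpha\in L^+$ and $\neg_{\scriptscriptstyle{D}}0=1$, while $\star$ is a fixed point.

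The good news is that the slip is local and the argument survives with the correct fact: what you actually need is that $\muc(A)\in L^+$ forces $\muc(A)=1$, and this already follows from $\muc(A)\in\{0,\star,1\}$ together with $\star\notin L^+$ (this is precisely how the paper argues, by excluding $\muc(A)=0$ and $\muc(A)=\star$ case by case, since in the latter case the summand is $\ge_\ell\star$). Your monotonicity argument for $\muc$ and the treatment of $A=\emptyset$ are unaffected by the correction, since $\neg_{\scriptscriptstyle{D}}$ remains $\le_\ell$-antitone with $\neg_{\scriptscriptstyle{D}}\star=\star$ and $\muc(\emptyset)=\neg_{\scriptscriptstyle{D}}1=0$ still holds. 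So replace the sentence asserting that $\muc$ takes only the values $0$ and $1$ by the correct statement that $\muc$ takes values in $\{0,\star,1\}$, and the proof is sound and essentially identical to the paper's.
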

\begin{proof}
($\Rightarrow)$ If $\int_{\dpr,\mu}^{\to_D} f= 0$, then  there exists $A$ such that $\mu^c(A)\to_D \bigvee_{D,i\in A} f_i=0$. We show that the equality is satisfied if and only if $\mu^c(A)=1$ and $\bigvee_{D,i\in A} f_i=0$. First, we should note that the negation is strong because of our assumption on the underlying residuated lattice that has no-zero divisors, therefore, $\mu^c(A)\in \{0,\star, 1\}$ for any $A\subseteq \Univ$, where $\neg_{\scriptscriptstyle{D}} \star =\star$. If $\mu^c(A)= 0$, then trivially $\mu^c(A)\to \bigvee_{D,i\in A} f_i=1$, which is a contradiction, therefore, $\mu^c(A)>_\ell 0$. If $\mu^c(A)=\star$, then  $\mu^c(A)\to_D \bigvee_{D,i\in A} f_i\geq_\ell \star$, which is again a contradiction, therefore, $\mu^c(A)= 1$. Hence, we obtain  $\bigvee_{D,i\in A} f_i=1\to_D \bigvee_{D,i\in A} f_i= 0$, and thus $\bigvee_{D,i\in A} f_i=0$. From the monotonicity of the capacity $\mu$, we obtain $\mu^c(\{i\mid f_i=0\})=1$. 

($\Leftarrow$) Put $A=\{i\mid f_i=0\}$. If $\mu^c(A)=1$, then $\mu^c(A)\to_D \bigvee_{D,i\in A} f_i=1\to_D 0=0$, therefore, $\int_{\dpr,\mu}^{\to_D} f= 0$. 
\end{proof}

\begin{theorem}\label{thm.6}
$\int_{\dpr,\mu}^{\to_D} f>_\ell \star $  if and only if $\mu^c(\{i\mid f_i=0\})=0$ and $\mu^c(\{i\mid f_i\le_\ell \star\})<_\ell1$.
\end{theorem}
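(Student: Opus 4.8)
The plan is to prove Theorem~\ref{thm.6} by following the same pattern as the proof of Theorem~\ref{thm.5}, but now isolating the condition under which the integral lands strictly above $\star$ in the $\le_\ell$ ordering. Recall that $\le_\ell$ is linear on $L^*$ with $0 <_\ell \star <_\ell \alpha$ for all $\alpha \in L^+$, so ``$\int_{\dpr,\mu}^{\to_D} f >_\ell \star$'' means the integral is some element of $L^+$, equivalently it is neither $0$ nor $\star$. Since the integral is a $\bigwedge_D$ over $A \subseteq \Univ$ of terms $\mu^c(A) \to_D \bigvee_{D,i\in A} f_i$, and $\bigwedge_D$ is the infimum for $\le_\ell$, we have $\int_{\dpr,\mu}^{\to_D} f >_\ell \star$ iff every term $\mu^c(A) \to_D \bigvee_{D,i\in A} f_i$ is $>_\ell \star$.

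First I would record the elementary case analysis for when a single term $\mu^c(A) \to_D t$ (with $t = \bigvee_{D,i\in A} f_i$) fails to exceed $\star$. Because the underlying residuated lattice has no zero divisors, $\neg_{\scriptscriptstyle D}$ is strict on $L$ and $\neg_{\scriptscriptstyle D}\star = \star$, so $\mu^c(A) \in \{0,\star,1\}$ always. Inspecting Tab.~\ref{tab.dfo}: if $\mu^c(A) = 0$ then $0 \to_D t = 1 >_\ell \star$ always; if $\mu^c(A) = \star$ then $\star \to_D t$ equals $1$ when $t = 0$, equals $t$ when $t \in \tilde L$, equals $\star$ when $t = \star$, and equals $1$ when $t = 1$ — so it fails ($=\star$) exactly when $t = \star$; if $\mu^c(A) = 1$ then $1 \to_D t = t$, which fails exactly when $t = 0$ ($t=\star$ would also give $\star$, i.e.\ also a failure). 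Combining, a term is $\le_\ell \star$ iff either ($\mu^c(A) = 1$ and $\bigvee_{D,i\in A} f_i \in \{0,\star\}$) or ($\mu^c(A) = \star$ and $\bigvee_{D,i\in A} f_i \in \{\star\}$). Now $\bigvee_{D,i\in A} f_i = 0$ forces $A \subseteq \{i \mid f_i = 0\}$, while $\bigvee_{D,i\in A} f_i \le_\ell \star$ forces $A \subseteq \{i \mid f_i \le_\ell \star\}$ (each $f_i$ is $0$ or $\star$, and conversely such an $A$ gives a $\bigvee_D$ that is $0$ or $\star$).

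For the ($\Leftarrow$) direction, assume $\mu^c(\{i\mid f_i=0\})=0$ and $\mu^c(\{i\mid f_i\le_\ell \star\})<_\ell 1$, i.e.\ $\mu^c(\{i\mid f_i\le_\ell \star\}) \in \{0,\star\}$. Take any $A$. If $\mu^c(A) = 1$: by monotonicity of $\mu^c$ (which follows from monotonicity of $\mu$ and strictness of $\neg_{\scriptscriptstyle D}$ on $L$ — I would note $\mu^c(B) \le_\ell \mu^c(B')$ whenever $B \supseteq B'$, since $\mu^c(B) = \neg_{\scriptscriptstyle D}\mu(\Univ\setminus B)$), $\mu^c(A) = 1$ together with $\mu^c(\{i\mid f_i\le_\ell\star\}) <_\ell 1$ forces $A \not\subseteq \{i\mid f_i\le_\ell\star\}$, hence some $i\in A$ has $f_i >_\ell \star$, so $\bigvee_{D,i\in A} f_i >_\ell \star$ and the term is $>_\ell\star$. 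If $\mu^c(A) = \star$: similarly $\mu^c(\{i\mid f_i = 0\}) = 0 <_\ell \star$ forces $A \not\subseteq \{i\mid f_i = 0\}$, so $\bigvee_{D,i\in A} f_i \neq 0$, hence $\bigvee_{D,i\in A} f_i \ge_\ell \star$; if it is $\star$ the term is $\star$ — so I need the sharper observation that $A \not\subseteq\{i\mid f_i=0\}$ but $A$ could still have all $f_i \le_\ell \star$. The clean fix: when $\mu^c(A) = \star$, monotonicity gives $\mu^c(\{i\mid f_i \le_\ell \star\}) \ge_\ell \mu^c(A) = \star$ only if $A \subseteq \{i\mid f_i\le_\ell\star\}$; so either $A\not\subseteq\{i\mid f_i\le_\ell\star\}$ (then some $f_i >_\ell\star$, term $>_\ell\star$), or $A\subseteq\{i\mid f_i\le_\ell\star\}$ and then $\mu^c(A)\le_\ell\mu^c(\emptyset\text{-ish})$... — here I would instead argue directly: if $A\subseteq\{i\mid f_i\le_\ell\star\}$ and $\mu^c(A)=\star$, then by monotonicity $\mu^c(\{i\mid f_i=0\})\le_\ell\star$; but we also need to rule out $\bigvee_D = \star$. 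This is where the hypothesis $\mu^c(\{i\mid f_i=0\})=0$ re-enters: if every $i\in A$ had $f_i\in\{0,\star\}$ and at least one $f_i=\star$, the term is $\star$; to forbid this I claim such an $A$ with $\mu^c(A)=\star$ cannot exist because... — I expect this to be the main obstacle, so I would handle it by a careful monotonicity chain: set $B = \{i\mid f_i\le_\ell\star\}$; for $A\subseteq B$, $\mu^c(A)\le_\ell\mu^c(B)<_\ell 1$ by monotonicity, and if moreover $A$ contains no $i$ with $f_i=0$... actually simplest is to split $B$ and use that $\mu^c(A) \le_\ell \mu^c(B)$ kills the $\mu^c(A)=1$ sub-subcase and that for $\mu^c(A)=\star$ we must have $\bigvee_{D,i\in A}f_i\ge_\ell\star$ with equality only when $A\subseteq B$, but then still need one more inequality.

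For ($\Rightarrow$), assume $\int_{\dpr,\mu}^{\to_D} f >_\ell \star$, so every term is $>_\ell\star$. Applying this to $A = \{i\mid f_i = 0\}$: here $\bigvee_{D,i\in A} f_i = 0$ (or the term for $A=\emptyset$ is vacuous/$1$, handled separately), so $\mu^c(A)\to_D 0 >_\ell\star$ forces $\mu^c(A)\neq 1$ and $\mu^c(A)\neq\star$, hence $\mu^c(\{i\mid f_i=0\})=0$. Applying it to $A = \{i\mid f_i\le_\ell\star\}$: here $\bigvee_{D,i\in A} f_i \le_\ell \star$, so $\mu^c(A)\to_D(\text{something}\le_\ell\star) >_\ell\star$ rules out $\mu^c(A)=1$ (which would give $\le_\ell\star$) and also $\mu^c(A)=\star$ unless $\bigvee_D = 0$; combining with strictness of $\mu^c$ we conclude $\mu^c(\{i\mid f_i\le_\ell\star\}) <_\ell 1$, as desired. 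I would close the argument by remarking, as the paper does after Theorem~\ref{thm.5}, that these conditions admit the natural reading: the residuum-based integral is known and non-$\star$ exactly when the ``complementary capacity'' of the zero-coordinates is $0$ and of the non-positive ($\le_\ell\star$) coordinates is below $1$. The only delicate point throughout is the monotonicity of $\mu^c$ and keeping the three-element chain $\{0,\star,1\}$ of possible $\mu^c$-values firmly in view; everything else is table lookup in Tab.~\ref{tab.dfo}.
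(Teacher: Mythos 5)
Your overall strategy (reduce to ``every term $\muc(A)\to_D\bigvee_{D,i\in A}f_i$ is $>_\ell\star$'' and then do a case analysis on $\muc(A)\in\{0,\star,1\}$) is exactly the paper's, and your ($\Rightarrow$) direction is essentially the paper's argument: test $A=\{i\mid f_i=0\}$ and $A=\{i\mid f_i\le_\ell\star\}$. But the proposal has a genuine gap, and its source is concrete: you misread the operation table. In the Dragonfly algebra one has $\star\to_D\star=1$ and $\star\to_D 0=\star$, whereas you assert the opposite (``$\star\to_D t$ equals $1$ when $t=0$ \dots equals $\star$ when $t=\star$''). Consequently your classification of the ``bad'' terms is wrong: for $\muc(A)=\star$ the term is $\le_\ell\star$ exactly when $\bigvee_{D,i\in A}f_i=0$, not when it equals $\star$. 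This false table entry is precisely what creates the ``main obstacle'' you describe in the ($\Leftarrow$) direction, and you never resolve it --- the subcase $\muc(A)=\star$ with $A\subseteq\{i\mid f_i\le_\ell\star\}$ is left dangling (``still need one more inequality''), so the proof of sufficiency is incomplete as written. (You also state monotonicity of $\muc$ backwards --- for $A\subseteq B$ one has $\muc(A)\le_\ell\muc(B)$, since $\neg_{\scriptscriptstyle D}$ reverses the order on $\{0,\star,1\}$ --- although you then use it in the correct direction.)

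With the correct table the obstacle disappears and the argument collapses to the paper's proof of ($\Leftarrow$): if all $f_i=0$ on $A$, then monotonicity and $\muc(\{i\mid f_i=0\})=0$ give $\muc(A)=0$, so the term is $0\to_D 0=1$; if all $f_i\le_\ell\star$ on $A$ with some $f_i=\star$, then $\muc(A)\le_\ell\star$ and the term is $\muc(A)\to_D\star=1$ (both $0\to_D\star$ and $\star\to_D\star$ equal $1$); and if $A$ contains some $i$ with $f_i>_\ell\star$, then the term lies in $\{\bigvee_{D,i\in A}f_i,\,1\}$ and hence is $>_\ell\star$. Note in particular that your worry about the case $\muc(A)=\star$, $\bigvee_{D,i\in A}f_i=\star$ is unfounded (the term is $1$), and the only genuinely dangerous case, $\bigvee_{D,i\in A}f_i=0$ with $\muc(A)=\star$, is already excluded by the hypothesis $\muc(\{i\mid f_i=0\})=0$ via monotonicity. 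Fix the two table entries and complete the $\muc(A)=\star$ subcase along these lines, and your proof coincides with the paper's.
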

\begin{proof}
($\Rightarrow)$ If $\int_{\dpr,\mu}^{\to_D} f>_\ell \star$, then for any $A\subseteq \Univ$, we have $\mu^c(A)\to_D \bigvee_{D,i\in A} f_i>_\ell \star$. Recall that $\mu^c(A)\in \{0,\star,1\}$ for any $A\subseteq \Univ$, and $\neg_{\scriptscriptstyle{D}} \star =\star$. Assume that $\mu^c(\{i\mid f_i=0\})>_\ell 0$. Then  $\mu^c(\{i\mid f_i=0\})\to_D \bigvee_{D,i\in \{i\mid f_i=0\}} f_i=\neg_{\scriptscriptstyle{D}} \mu^c(\{i\mid f_i=0\})\le_\ell\star$, which is a contradiction. Hence, we obtain $\mu^c(\{i\mid f_i=0\})=0$.  Further, assume that $\mu^c(\{i\mid f_i\le_\ell\star\})=1$. From $\mu^c(\{i\mid f_i=0\})=0$, there is  $f_i=\star$ for at least one $i\in \Univ$. Put $A=\{i\mid f_i\le_\ell \star\}$. Then $\mu^c(A)\to_D \bigvee_{D,i\in A} f_i=1\to_D \star=\star$, which is a contradiction. Hence, we obtain that $\mu^c(\{i\mid f_i\le_\ell \star\})<1$. 

($\Leftarrow$) Since $\mu^c(\Univ)=1$, from the assumption, we find that there is  $f_i>_\ell \star$ for at least one $i\in \Univ$. Let $A\subseteq \Univ$. If $f_i=0$ for any $i\in A$, from the monotonicity of $\mu^c$ and the assumption $\mu^c(\{i\mid f_i=0\})=0$, we obtain $\mu^c(A)=0$. Hence, we get $\mu^c(A)\to_D \bigvee_{D,i\in A} f_i=0\to_D 0=1$, and $A$ can be ignored in the calculation of the qualitative integral. Further, if $f_i\le_\ell \star$ for any $i\in A$ and there is $f_i=\star$ for a certain $i\in A$, we obtain $\mu^c(A)\le_\ell \star$. Hence, we get $\mu^c(A)\to_D \bigvee_{D,i\in A} f_i=\mu^c(A)\to_D \star=1$, and again $A$ can be ignored in the calculation of the qualitative integral. We denote $\mathcal{D}=\{A\subseteq \Univ \mid f_i>_\ell \star \hbox{ for at least one } i\in A\}$. Obviously, $\mathcal{D}\neq \emptyset$. By the previous analysis, we find that
\begin{align*}
    \int_{\dpr,\mu}^{\to_D} f =\bigwedge_{D, A\in \mathcal{D}} \mu^c(A)\to_D \bigvee_{D, i\in A} f_i>_\ell \star,
\end{align*}
where we used $\mu^c(A)\to_D \bigvee_{D, i\in A} f_i\in \{\bigvee_{D, i\in A} f_i,1\}$.
\end{proof}

Now, we can advance to a characterization of cases when the residuum-based integrals return the unknown value.

\begin{theorem}\label{thm.7}
$\int_{\dpr,\mu}^{\to_D} f= *$  if and only if there exists  $f_i\in \{0,*\}$ for a certain $i\in \Univ$ and one of the following conditions is satisfied:
\begin{enumerate}[a)]
    \item $\mu^c(\{i\mid f_i=0\})=*$,%, if $f_i=0$ for at least one $i\in \Univ$, \antcom{It seems to me that ``$f_i=0$ for at least one $i\in \Univ$'' follows from $\mu^c(\{i\mid f_i=0\})=*$, hence it is not necessary to state it explicitly.} 
    %\agncom{I think so}
    \item $\mu^c(\{i\mid f_i=0\})=0$, if $f_i=0$ for at least one $i\in \Univ$, and $\mu^c(\{j\mid f_j\le_\ell *\})=1$, %if $f_j=*$ for at least one $j\in \Univ$,
%    \agncom{if $f_j =*$ for at least $j$ seems not necessary because $\mu^c(\{i\mid f_i=0\})=0$ }
  %  \michcom{I agree with Agnes.} \antcom{Thanks, I agree too.}
    \item $\mu^c(\{i\mid f_i= *\})=1$, if $f_i>_\ell 0$ for any $i\in\Univ$. %$f_i=*$ for at least one $i\in \Univ$ and $f_j>_\ell 0$ for any $j\in\Univ$.%\antcom{It seems to me that ``$f_i=*$ for at least one $i\in \Univ$'' follows from $\mu^c(\{i\mid f_i= *\})=1$, hence it is not necessary to state it explicitly.}
    %\agncom{I think so}
\end{enumerate}  
\end{theorem}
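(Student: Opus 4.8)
The plan is to reduce this statement to the two preceding results by exploiting that $\le_\ell$ is a \emph{linear} order on $L^*$. Since $0<_\ell\star<_\ell\alpha$ for every $\alpha\in L^+$, the three sets $\{0\}$, $\{\star\}$ and $\{x\in L^*\mid x>_\ell\star\}=L^+$ partition $L^*$, so $\int_{\dpr,\mu}^{\to_D}f$ lies in exactly one of them. Hence $\int_{\dpr,\mu}^{\to_D}f=\star$ holds if and only if the characterizing conditions of both Theorem~\ref{thm.5} (for the value $0$) and Theorem~\ref{thm.6} (for values $>_\ell\star$) fail. I would then rewrite those negations using the fact, already established in the proof of Theorem~\ref{thm.5}, that $\mu^c(A)\in\{0,\star,1\}$ for every $A\subseteq\Univ$ (because $\neg_{\scriptscriptstyle{D}}$ is strict on $L$ and $\neg_{\scriptscriptstyle{D}}\star=\star$). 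Writing $Z=\{i\mid f_i=0\}$ and $W=\{i\mid f_i\le_\ell\star\}$ — note $W=Z\cup\{i\mid f_i=\star\}$ is a disjoint union, since $f_i\le_\ell\star$ iff $f_i\in\{0,\star\}$ — the negation of Theorem~\ref{thm.5} becomes $\mu^c(Z)\in\{0,\star\}$, and, since $\mu^c(W)<_\ell 1$ is the same as $\mu^c(W)\ne 1$, the negation of Theorem~\ref{thm.6} becomes $\mu^c(Z)\ne 0$ or $\mu^c(W)=1$. Putting this together, $\int_{\dpr,\mu}^{\to_D}f=\star$ if and only if
\begin{align*}
\mu^c(Z)\in\{0,\star\}\quad\text{and}\quad\bigl(\mu^c(Z)\ne 0\ \text{ or }\ \mu^c(W)=1\bigr).
\end{align*}

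The next step is to split this equivalent condition into the three listed cases according to the value of $\mu^c(Z)$. If $\mu^c(Z)=\star$, both conjuncts are automatic, which is case~a). If $\mu^c(Z)=0$, the condition reduces to $\mu^c(W)=1$, and I would distinguish whether $Z\ne\emptyset$ — giving case~b) — or $Z=\emptyset$; in the latter situation $\mu^c(Z)=\mu(\Univ)\to_D 0=1\to_D 0=0$ is automatic, $W$ coincides with $\{i\mid f_i=\star\}$, and $\mu^c(W)=1$ is precisely case~c). Finally I would verify that the leading clause of the statement, ``$f_i\in\{0,\star\}$ for some $i\in\Univ$'' (i.e. $W\ne\emptyset$), is forced by a)--c): in case~b) it is part of the hypothesis, and in cases~a) and~c) it follows because $\mu^c(\emptyset)=0$ equals neither $\star$ nor $1$, so the set whose $\mu^c$-value is prescribed ($Z$ in case~a), $\{i\mid f_i=\star\}$ in case~c)) must be nonempty.

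Almost all of this is routine bookkeeping; the one spot that needs a little attention is keeping the degenerate case $Z=\emptyset$ consistent — checking that ``$\mu^c(Z)=0$'' is then vacuous (so that case~c) legitimately omits it) and that $W=\{i\mid f_i=\star\}$ there — together with confirming that the (strictly redundant) leading clause is indeed implied by a)--c), so that the equivalence is correct exactly as stated.
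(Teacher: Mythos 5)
Your argument is correct, and it coincides with the paper's proof in the forward direction: the paper likewise obtains the necessity of a)--c) by negating Theorems~\ref{thm.5} and~\ref{thm.6} and splitting on whether some $f_i=0$. Where you genuinely depart from the paper is the converse. The paper does not reuse Theorems~\ref{thm.5} and~\ref{thm.6} there; instead it goes back to the definition of $\int_{\dpr,\mu}^{\to_D} f$ and, in each of the cases a), b), c), exhibits a subset whose term $\muc(A)\to_D\bigvee_{D,i\in A}f_i$ equals $\star$ while showing no subset can produce a term equal to $0$ (which forces the infimum to be $\star$). You instead observe that, because $\le_\ell$ is linear and $\{0\}$, $\{\star\}$, $L^+$ partition $L^*$, the value $\star$ is characterized exactly by the simultaneous failure of the two earlier biconditionals, so the whole theorem becomes a piece of Boolean bookkeeping on the conditions $\muc(Z)\in\{0,\star,1\}$, $\muc(\emptyset)=0$ and $W=Z\cup\{i\mid f_i=\star\}$. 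This is sound (both cited theorems are full equivalences, so using their ``if'' directions for your converse is legitimate) and buys economy and transparency: the computational content is confined to Theorems~\ref{thm.5} and~\ref{thm.6}, and you even make explicit the small point the paper leaves implicit, namely that the leading clause ``$f_i\in\{0,\star\}$ for some $i$'' is redundant because $\muc(\emptyset)=0$ rules out the empty set in cases a) and c). The paper's direct verification, by contrast, is more self-contained and shows concretely which term of the infimum realizes the value $\star$; your careful handling of the degenerate case $Z=\emptyset$ (so that c) may omit $\muc(Z)=0$ and $W$ collapses to $\{i\mid f_i=\star\}$) is exactly the bookkeeping needed to make the reduction match the stated case split, and you did it correctly.
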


\begin{proof}
($\Rightarrow$) Obviously, if  $\int_{\dpr,\mu}^{\to_D} f= *$, then by Theorems~\ref{thm.5}~and~\ref{thm.6}, we simply obtain,  as the negations of the respective statements, that 
$\mu^c(\{i\mid f_i=0\}) <_\ell1$ and $(\mu^c(\{i\mid f_i=0\}) >_\ell 0$ or $\mu^c(\{i\mid f_i\le_\ell *\}) = 1$). First, let us consider that there is $f_i=0$ for at least one $i\in \Univ$. We obtain that $\mu^c(\{i\mid f_i=0\}) <_\ell 1$ and $(\mu^c(\{i\mid f_i=0\}) >_\ell 0$, which  results to $\mu^c(\{i\mid f_i=0\})=*$ stated in a), or $\mu^c(\{i\mid f_i=0\}) <_\ell 1$ and $\mu^c(\{j\mid f_j\le_\ell *\})=1$. Since $\mu^c(\{i\mid f_i=0\})=*$ is already included in a), we can omit it here. Moreover, $\mu^c(\{j\mid f_j\le_\ell *\})=1$ can be true only if there is $f_j=*$ for at least one $j\in \Univ$. %\antt{Moreover, $\mu^c(\{i\mid f_i=0\})=*$ can be true only if there is $f_i=*$ for at least one $i\in \Univ$.} \antcom{I don't understand the previous sentence. You mean: ``''?} 
%\agncom{I agree with Antonin}
By combining these conditions we obtain b). Further, let us assume that $f_i\neq 0$ for any $i\in \Univ$. It is easy to see that  $\mu^c(\{i\mid f_i=0\}) <_\ell1$ and $(\mu^c(\{i\mid f_i=0\}) >_\ell 0$ cannot occur, because $\mu^c(\{i\mid f_i=0\}) = 0$. So we just have  $\mu^c(\{i\mid f_i=0\}) <_\ell 1$ and $\mu^c(\{j\mid f_j\le_\ell *\})=1$. Since $f_i\neq 0$ for any $i\in \Univ$, we find that $\mu^c(\{i\mid f_i=0\}) <_\ell 1$ is trivially true and it must hold $\mu^c(\{j\mid f_j= *\})=1$, where we assume that there exists at least one $f_j=*$ for $j\in \Univ$. 
Combining these conditions we obtain c). To complete this part of the proof, we have to realize that a)-c) cannot occur if $f_i\not\in \{0,\star\}$ for all $i\in \Univ$.

%there exists i$A\subseteq \Univ$ such that $\mu^c(A)\to_D \bigvee_{D,i\in A} f_i=*$ and for all $A\subseteq \Univ$, $\mu^c(A)\to_D \bigvee_{D,i\in A} f_i \neq 0$. From the definition of $\to_D$, we know that $*\to_D 0=*$ or $\alpha\to_D *=*$ for $\alpha>*$ (see, Table~\ref{tab.dfo}).Since $\mu^c(A)\in \{0,\star,1\}$, the system of equations reduces to two cases $*\to_D 0=*$ and $1 \to_D *=*$, where at least one has to appear to get equality $\int_{\dpr,\mu}^{\to_D} f= *$. First, let us assume that $*\to_D 0=*$ appears. Hence, there is $f_i=0$ for at least one $i\in \Univ$. To get $*\to_D 0=*$, we have to consider subsets $A\subseteq \{i\mid f_i=0\}$ and for at least one such subset $A$, we have to have $\mu^c(A)=*$. Because of the monotonicity of $\mu^c$, we find that $\mu^c(\{i\mid f_i=0\})\geq_\ell \mu^c(A)= \star$. Assume that $\mu^c(\{i\mid f_i=0\})=1$. Then we obtain $\mu^c(\{i\mid f_i=0\})\to 0=0$, which is a contradiction with $\int_{\dpr,\mu}^{\otimes_D} f= *$. Hence, we obtain  $\mu(\{i\mid f_i=0\})=*$. Further, let us assume that $1\to *=*$ appears. Hence, there is $f_i=*$ for at least one $i\in \Univ$.  Again, to get $1\to *=*$,  we have to consider subsets $A\subseteq \{i\mid f_i\leq_\ell *\}$ containing at least one $*$, and for at least  one such subset $A$, we have to have  $\mu^c(A)=1$. From the monotonicity of $\mu^c$, we find that $1=\mu^c(A)\leq_\ell \mu^c(\{i\mid f_i\leq_\ell *\})$, and moreover, $\bigvee_{i\in \{i\mid f_i\leq *\}} f_i=*$.  Since case $* \to_D 0 = *$ can appear if and $ 1 \to_D * = *$, we obtain the statement.

($\Leftarrow$) Assume that $f_i\in \{0,\star\}$ for a certain $i\in \Univ$. First, let  a) holds. Obviously, there is $f_i=0$ for at least $i\in \Univ$, and  we have $\mu^c(\{i\mid f_i=0\})\to_D \bigvee_{D,i\in \{i\mid f_i=0\}} f_i=\star\to_D 0=\star$. Assume that there is $A\subseteq \Univ$ such that $\mu^c(A)\to_D\bigvee_{D, i\in A} f_i=0$. Since $\mu^c(A)\in \{0,\star,1\}$, we simply find that $\mu^c(A)=1$ and $\bigvee_{D, i\in A} f_i=0$ to obtain the residuum equal to $0$. But $A\subseteq \{i\mid f_i=0\}$ and thus $\mu^c(A)\le_\ell \mu^c(\{i\mid f_i=0\})= \star$, which is a contradiction. Hence, there is no subset $A\subseteq \Univ$ such that $\mu^c(A)\to_D\bigvee_{D, i\in A} f_i=0$, and we get $\int_{\dpr,\mu}^{\to_D} f= *$.  

Further, let  b) holds.  Since there is $f_i=\star$ for a certain $i\in \Univ$ (otherwise, $\mu^c(\{j\mid f_j\le_\ell *\})\not=1$), we obtain $\mu^c(\{i\mid f_i\le_\ell \star\})\to_D \bigvee_{D, i\in \{i\mid f_i\le_\ell \star\}} f_i=1\to_D \star=\star$. Assume that there is $A\subseteq \Univ$ such that $\mu^c(A)\to_D\bigvee_{D, i\in A} f_i=0$. Similarly to  case a), the equality is satisfied only if $\mu^c(A)=1$ and $\bigvee_{D,i\in A}f_i=0$. But for any $A\subseteq \{i\mid f_i=0\}$, we have $\mu^c(A)\le_\ell \mu^c(\{i\mid f_i=0\})=0$, therefore, there is no subset $A$ such that $\mu^c(A)\to_D\bigvee_{D, i\in A} f_i=0$, and we get $\int_{\dpr,\mu}^{\to_D} f= *$. 

Finally, let c) holds. Similarly to case b), we have $\mu^c(\{i\mid f_i=\star\})\to_D \bigvee_{D, i\in \{i\mid f_i= \star\}} f_i=1\to_D \star=\star$. Assume that there is $A\subseteq \Univ$ such that $\mu^c(A)\to_D\bigvee_{D, i\in A} f_i=0$. Again, the equality holds only if $\mu^c(A)=1$ and $\bigvee_{D,i\in A} f_i=0$. Since $f_i\neq 0$ for any $i\in \Univ$, we find that $A=\emptyset$, but $\mu^c(\emptyset)=0$, therefore, there is no subset $A$ such that  $\mu^c(A)\to_D\bigvee_{D, i\in A} f_i=0$, and we get $\int_{\dpr,\mu}^{\to_D} f= *$, which concludes the proof.
%there is $f_i=*$ for a certain $i$ and assume that 
%{\color{blue} $\mu^c(\{i\mid f_i\leq *\})=1$.} Obviously, $\int_{\dpr,\mu}^{\to_D} f\leq *$.  To get $\mu^c(A)\to  \bigvee_{D,i \in A} f_i=0$, we have to have $\mu^c(A)>*$ and $f_i=0$ for any $i\in A$. But if $f_i=0$ for at least one $i$, we have $\mu^c(\{i\mid f_i=0\})=*$, hence, we obtain $\mu^c(A)\leq \mu^c(\{i\mid f_i=0\})=*$ from the monotonicity of $\mu^c$. Therefore, $\mu^c(A)\to  \bigvee_{D,i \in A} f_i\neq 0$ for any $A$, which implies $\int_{\dpr,\mu}^{\to_D}f= *$. Assume that there is $f_i=0$. From the assumption, we obtain $\int_{\dpr,\mu}^{\to_D} f\leq *$. To get $\mu^c(A)\to  \bigvee_{D,i \in A} f_i=0$, we have to have $\mu^c(A)>*$ and $f_i=0$ for any $i\in A$, but it is impossible as was shown above. 
\end{proof}

\begin{ex}
If the capacity $\mu$ is unknown, $\mu(A)= *$ everywhere excepts on $\emptyset$ and $\Univ$ and if all values of $f$ are known and different from $0$ then $\int_{\dpr,\mu}^{\to_D} f = \bigwedge_{D,i \in \Univ} f_i$.
%\antcom{Sorry, now I see that we need to suppose also that  $f_i > 0$ for all $i$, otherwise we obtain from Theorem~\ref{thm.7} a) that $\int_{\dpr,\mu}^{\to_D} f = *$.}
%\antcom{I think that it is necessary to assume that all values %of $f$ are known in this example.}
%\agncom{At least one known value different from  1 and no %values equal to 0 ? } \antcom{I am not sure... E.g., for $f=(*, %0.5)$ I obtained 0.5 as a result, but $\wedge_D(*,0.5) = *$.}
\end{ex}

\begin{ex}
We consider $\Univ=\{1,2,3\}$, $f=(f_1,f_2,f_3)=(*,*,\alpha)$, where $\alpha \in L^+$.
\begin{itemize}
    \item Let us suppose 
$\mu(A) =* $ if $\{3\} \subseteq A$, $A \neq \Univ$ $\mu(\Univ)=1$ and $\mu(A) =0$ otherwise. 
 We have $\int_{\dpr,\mu}^{\to_D} f= (* \to_D * )\wedge_D (1 \to_D \alpha) = \alpha.$
\item 
Let us suppose $\mu(A) = 1 $ if $\{1,2\} \subseteq A$ and $\mu(A)=0$ otherwise. We have $\int_{\dpr,\mu}^{\to_D} f= *$.

%let us suppose $\mu(A) = \alpha $ if $\{3\} \subseteq A$, $A \neq \Univ$ $\mu(\Univ)=1$ and $\mu(A) =0$ otherwise. 
 %We have $\int_{\dpr,\mu}^{\to_D} f= \wedge_D (\alpha \to_D *, 1 \to_D \alpha )= *.$ \antcom{I tried to compute the result of this example and obtained $\alpha$. It is so because $\mu^c$ is used in the residuum-based integral, this $\mu^c$ is computed using $\neg$, and $\neg$ is strict, hence we can never have $\alpha$ on the left side of $\to_D$ in the computation of $\int_{\dpr,\mu}^{\to_D} f$. Only $0$, $1$ or $*$ can be there.}
 \end{itemize}
\end{ex}

%\Agnes{ Perhaps we need to add theorems for 
%$\int_{\dh,\nu}^{\to_D} f $ and $\int_{\dh,\nu}^{\otimes_D}$?\\
%}
%= \bigwedge_{D,A} \bigvee_{i \in A} ( f_i \to_D \nu(A)).$$ 

%If $\mu$ is known everywhere, if there exists one value unknown for a criteria with a non nul capacity then $\int_{\dh,\nu}^{\to_D} f$ is $0$ or unknown. 

%{\color{blue} Let us consider qualitative integrals on 
%$(L^*,\wedge,\vee,\otimes_{G}, \to_{KD}~)$ and 
%$(L^*,\wedge,\vee, \otimes_{\mathcal{E} G}, \to_{CG}) $  which are not residuated lattices.
%Defined on $L$ in \cite{DPR16}, they are named soft and drastic %integrals.

%\begin{ex}
%$\Univ=\{ 1,2\}$ , $L=\{ 0 < \alpha < 1-\alpha <1 \}$, $ %f=(f_1,f_2)= (*, 1-\alpha)$,
%$\mu(\{1\})=\mu(\{2\}) = \alpha $, $\mu(\emptyset )= 0$, 
%and $\int_{DPR, \mu }^{\otimes_G} (f) = *.$\\
%We obtain the same result considering 
%$f=(f_1,f_2)= (*, \alpha)$ we obtain 
%$\int_{DPR, \mu }^{\otimes_{CG}} (f) = *.$\\
%Theorem  \ref{intdpr*} is not satisfied. $\mu$ is known but the %result of the integral is $*$. 
%\end{ex}

\section{Knowledge representation model for data with unknown values}\label{s.example_unknown_values}
In this section, we demonstrate how to model expert knowledge if there are unknown values in data. We assume that the data is a collection of pairs $(f_j, \alpha_j)$, $j=1,\dots,m$, where $f_j$ is the input vector providing the evaluation of the $j$-th object according to the given criteria and $\alpha_j$ represents the global evaluation of this object given by an expert. The expert knowledge is then modeled with such capacities that the qualitative integrals return expert global evaluations to given input vectors (see, e.g., \cite{DDPRF15}). 
In practice, we face the problem that the data are incomplete, specifically some evaluations of the object are unknown. In the next subsections, we show that the expert knowledge can be successfully modeled if the unknown values are interpreted as $*$ in a Dragonfly algebra.

For the illustrative purpose, we  consider the data presented in \cite{F}
%\cite{DDPRF15} 
that were collected on retention basins in Lyon to model the impact of four indicators, namely, Esterase Activity ($\AE$), Alkaline Phosphatase Activity ($\APA$), Chlorophyll Fluorescence ($\F$), and Algal Growth ($\C$), on the water ecosystem health. Data are displayed in Table~\ref{Tab2} and consist of $14$ measurements of the indicators (i.e., $\Univ=\{\AE, \APA, \F, \C\}$) and  the expert global evaluations of the impact  on the water ecosystem health 
(denoted by $\alpha$). Note that there are two unknown values indicated by the empty place. We use the same evaluation scale as in \cite{DDPRF15}, i.e., the totally ordered scale $L =\{1,2,3,4,5\}$, and the Dragonfly algebra $L^*$ with the underlying residuated lattice $L$ defined by the G\"{o}del system (see Examples~\ref{ex1}~and~\ref{ex2}). We restrict our analysis to the multiplication-based qualitative integral $\int_{\dpr,\mu}^{\wedge}$, which coincides with the Sugeno integral. 
\begin{table}[ht]
\begin{center}  
{\small
$
\begin{array}{ll}
%\mbox{\bf data under rainy weather} & \\
\begin{array}{|c|c|c|c|c|}
\hline
\AE & \APA & \F& \C  & \alpha\\
\hline
4    &  2   & 3  &  3  & 3  \\
4    &   2  & 1  & 1  & 2  \\
2  &         & 3  & 1  & 2 \\
2  & 4      & 2  & 1  & 2 \\
5 & 4      &  3 & 1 & 3 \\
3  &         & 4  & 3 & 3 \\
1  &  3     & 5  & 4 & 3 \\
\hline
\end{array}
&
\begin{array}{|c|c|c|c|c|}
\hline
\AE & \APA & \F& \C  & \alpha \\
\hline
1 &   5 & 3 & 3 & 3 \\
1  &  1   & 4 & 2 & 2 \\
2  & 3    & 3 & 3 & 3\\
5  & 2   & 2 & 1 & 2 \\
4  & 5   & 4 & 2 & 4\\
3 & 4 & 3 & 1 & 3 \\
1   &  1 & 5 & 5 & 3 \\
\hline
\end{array}
\end{array}
$
}
\end{center}
\caption{Evaluation of criteria with unknown values.}\label{Tab2}
\vspace{-0.5cm}
\end{table}

To calculate the admissible capacities, we extend  the process presented  in \cite{RLGC02}  to Dragonfly algebras. More specifically, for any pair $(f,\alpha)$, where $\alpha\in L^*$, 
the set of admissible capacities is given as 
$M_{f,\alpha}=\{\mu\mid  \int_{\dpr,\mu}^{\wedge} f=\alpha \}=\{\mu\mid  \check{\mu}_{f,\alpha} \leq_\ell \mu \leq_\ell \hat{\mu}_{f,\alpha}\}$,
where \vspace{3pt}

%\begin{itemize}
    %\item 
\noindent $\bullet\,  \check{\mu}_{f,\alpha}(A) = 
\alpha$, if $\{i\mid f_i\geq_\ell \alpha \}\subseteq A$, $\check{\mu}_{f,\alpha}(A) = 
1$, otherwise,

\noindent $\bullet\,  \hat{\mu}_{f,\alpha}(A)=
\alpha$, if  $A \subseteq \{i\mid f_i>_\ell \alpha\}$, $\check{\mu}_{f,\alpha}(A) = 
5$, otherwise,

\vspace{6pt}

\noindent and $\check{\mu}(\emptyset)=\hat{\mu}(\emptyset)=1$, $\check{\mu}(\Univ)=\hat{\mu}(\Univ)=5$. A capacity $\mu$  is admissible with regard to data given as $(f_j,\alpha_j)$, $j=1,\dots,m$, provided that $\mu\in \bigcap_{j=1}^m M_{f_j,\alpha_j}$. It should be noted that, given data, the existence of an admissible capacity is not guaranteed.
%$\check{\mu}_{f,\alpha} $ and $\hat{\mu}_{f,\alpha}$ are capacities defined by\\

%$
%\mbox{ and }$$
%$$
%We can found pieces of data in which some coordinates of $f$ are unknown, but $\alpha$ is known (see, for example, \cite{DDPRF15}). we will represent the unknown values by intervals of values instead of the value
%$*$ from a Dragonfly algebra. 
%The first subsection provides an example in which the unknown values are not the minimum of the possible values present in the scale. 

\subsection{Expert knowledge model determined from complete data}
Let us assume that the admissible capacities for data in Table~\ref{Tab2} are calculated only from the input vectors whose all coordinates are known, in other words, the incomplete input vectors are ignored. The family of all admissible capacities expressed using bounds, i.e., the lower capacity $\check{\mu}$ and the upper capacity $\hat{\mu}$, is displayed in Table~\ref{Tab3} (cf. \cite{DDPRF15}).

\begin{table}[h]
$$
\begin{array}{|l|c|c||l|c|c||l|c|c|}
\hline
criteria              & \check{\mu} & \hat{\mu} &
criteria              & \check{\mu} & \hat{\mu} & 
criteria              & \check{\mu} & \hat{\mu}   \\
\hline
 \scriptstyle\{ \AE\}          &  1         & 2            & 
\scriptstyle\{ \APA\}          &  1          & 2          & 
\scriptstyle\{ \F\}          & 1           & 2           \\
\scriptstyle\{ \C\}           & 1            & 3          &
\scriptstyle\{ \AE, \APA\}        & 2         & 3           &  
\scriptstyle\{ \AE, \F\}        & 1         & 5           \\
\scriptstyle\{ \AE, \C\}        & 1         & 5            &   
\scriptstyle\{ \APA, \F\}        & 1         & 5          &    
\scriptstyle\{ \APA, \C\}        & 1         & 5            \\
\scriptstyle\{ \F,\C\}        & 3         & 3              &
\scriptstyle\{ \AE,\APA, \F\}        & 4         & 5        &    
\scriptstyle\{ \AE,\APA,\C\}        & 2         & 5          \\
\scriptstyle\{ \AE,\F,\C\}        & 3         & 5           &
\scriptstyle\{ \APA, \F,\C\}        & 3         & 5          &    
\scriptstyle \{ \AE, \APA, \F,\C\}    & 5         & 5  \\
\hline
\end{array}
$$
\caption{Family of admissible capacities for known input vectors.}
\vspace{-0.5cm}
\label{Tab3}
\end{table}

%\vspace{-5mm}
%\begin{table}
%{\small
%$
%\begin{array}{l}
 %\mbox{\bf data under dry weather}\\
%
%\begin{array}{|c|c|c|c|c|}
%\hline
%AE & APA & F& C & \mbox{gl. ev.}\\
%\hline
%1     & 3    & 5      & 5 & 4 \\
%1  & 5   & 4        & 5 &  4 \\
%1 & 5  &  1          & 1 & 2 \\
%5 & 5    & 4       & 3 & 4\\
%1     &          & 4        & 2 & 3\\
%3   & 4   & 4        & 3 & 3\\
%3   & 1   &                & 1 & 2\\
%4  &           &               & 3 & 3\\
%        & 2   & 3       & 4 & 3\\
%        \hline
%\end{array}        
%\end{array}
%$
%}
%\caption{Rescaled data}
%\label{rescal}%\vspace{-0.5cm}
%\end{table}

%The evaluation scale is equipped with the reversing order map %$\nu$ defined by: 
%$\nu(1)=5, \, \nu(2)=4,  \, \nu(3)=3.$  %Hence the expert takes into acc

%The capacities $\check{\mu}$ and $\hat{\mu}$. 

Now, let us express the unknown values by $*$ (a lower estimation different from the least evaluation specifying no impact on the water ecosystem health) and consider the calculation of the qualitative integral over the Dragonfly algebra $L^*$ for the capacity $\check{\mu}$. For example, %if we consider the input vector $f_3=(2,*,3,1)$, we obtain 
we have $\int_{\dpr,\check{\mu}}^{\wedge_D}(2,*,3,1) =1 \neq 2$, where $2$ is the expert global evaluation. % given by the expert. 
On the other side, %for $f_6=(3,*,4,3)$, we have
$\int_{\dpr,\check{\mu}}^{\wedge_D}(3,*,4,3) =3 $, which perfectly fits the expert global evaluation. Therefore, the expert knowledge described by admissible capacities works unreliably for input vectors with unknown components, although we used the lower estimation strategy. %, i.e. the incomplete vectors are represented by lower estimations. 
But it is not surprising, since data $(f_3, 2)$ and $(f_6, 3)$ were not used for the determination of the family of admissible capacities in Table~\ref{Tab3}, and clearly $*$ effects the calculation. % of the integral. 
On the other hand, capacities calculated from complete input vectors can reveal possible values for unknown components in incomplete vectors that fit the expert knowledge.

%\agncom{ I propose the following extension but I can mistaken.\\
%But when we consider some piece of data with unknown coordinate, the result given by the Sugeno integral may be wrong.} 

%{\color{blue}
%Considering a capacity known everywhere it seems that a unknown value can be replaced with a set of possible values. Let us present an example.  

\begin{ex}Consider $\check{\mu}$ from Table~\ref{Tab3} and the incomplete input vector  $f_3=(2,\, ,3,1)$. %, where $*$ is not an appropriate representative of the unknown component as demonstrated above.  
%we have
%$S_{\check{\mu}}(2,*,3,1) =1$. This value is not $2$, the global evaluation given by the expert. 
%
%and $S_{\hat{\mu}}(2,*,3,1) =2$. 
%The global evaluation given by the expert is $2$ so 
Define $ f_3 '= (2, \{2,3\}, 3,1) $ as a new input vector with the unknown value  expressed by the interval $\{2,3\}$ instead of $*$, which failed to provide the correct global evaluation. It is easy to show that  $\int_{\dpr,{\check{\mu}}}^\wedge (2,2,3,1) =\int_{\dpr,{\check{\mu}}}^\wedge(2,3,3,1)= 2$. So, the values $2$ and $3$ (i.e., the interval $\{2,3\}$) seem to be more appropriate representatives of the unknown value than the lower estimation given by $*$. This observation initiates our future research on the use of intervals on  lattices to express unknown values, while global evaluation is provided by qualitative intervals for input interval vectors.

%For the object $f=(3,*,4,3)$, we have  
%$S_{\check{\mu}}(3,*,4,3) =
%%S_{\hat{\mu}}(3,*,4,3) =
%3$, which is the given global evaluation. 
%Hence, 
%$*$ is a suitable value.

%Hence with this dataset we can consider that an unknown value is either $*$ or 
%the set of possible values $\{ 2,3\}$.
\end{ex}
%}
\subsection{Expert knowledge model determined from incomplete data}
Let us express the unknown values by $*$ from the Dragonfly algebra. The natural question is whether we are able to calculate the family of admissible capacities for such data directly considering the qualitative integral on the Dragonfly algebra. The answer is positive and the family of all admissible capacities expressed using bounds   $\check{\mu}$ and $\hat{\mu}$ is in Table~\ref{tab4}.
%In such a context the values of the capacity are searched in $L^* =\{1, *,2,3,4,5\}$. $L^*$ is a totally ordered set so we can use the following results proved in \cite{RLGC02}.\\
%Considering  $\alpha \in L^*$, we have 
%$\{\mu| \int_{DPR,\mu}(f)=\alpha \}=\{\mu| \check{\mu}_{f,\alpha} \leq \mu \leq \hat{\mu}_{f,\alpha}\}$
%where 
%$\check{\mu}_{f,\alpha} $ and $\hat{\mu}_{f,\alpha}$ are capacities defined by\\
%$\check{\mu}_{f,\alpha}(A)= 
%\alpha \mbox{ if } \{i|f_i\geq \alpha \} \subseteq A; 
%5 \mbox{ if } A = \Univ \mbox{ and }
%1 \mbox{ otherwise.}
%$
%$
%\mbox{ and }$$
%$$
%$
%\hat{\mu}_{f,\alpha}(A)=
%\alpha \mbox{ if } A \subseteq \{i|f_i> \alpha \};
%1 \mbox{ if } A = \emptyset \mbox{ and }
%5 \mbox{ otherwise}.
%$\\

%We compute $\check{\mu}$ and $\hat{\mu}$ with  all dataset.
%We obtain the following values

\begin{table}[h]
{\small
$$
\begin{array}{|l|c|c||l|c|c||l|c|c|}
\hline
criteria              & \check{\mu} & \hat{\mu} &
criteria              & \check{\mu} & \hat{\mu} & 
criteria              & \check{\mu} & \hat{\mu}   \\
\hline
 \scriptstyle\{ \AE\}          &  1         & 2            & 
\scriptstyle\{ \APA\}          &  1          & 2          & 
\scriptstyle\{ \F\}          & 1           & 2           \\
\scriptstyle\{ \C\}           & 1            & 3          &
\scriptstyle\{ \AE, \APA\}        & 2         & 3           &  
\scriptstyle\{ \AE, \F\}        & 2        & 5           \\
\scriptstyle\{ \AE, \C\}        & 1         & 5            &   
\scriptstyle\{ \APA, \F\}        & 1         & 5          &    
\scriptstyle\{ \APA, \C\}        & 1         & 5            \\
\scriptstyle\{ \F,\C\}        & 3         & 3              &
\scriptstyle\{ \AE,\APA, \F\}        & 4         & 5        &    
\scriptstyle\{ \AE,\APA,\C\}        & 2         & 5          \\
\scriptstyle\{ \AE,\F,\C\}        & 3         & 5           &
\scriptstyle\{ \APA, \F,\C\}        & 3         & 5          &    
\scriptstyle \{ \AE, \APA, \F,\C\}    & 5         & 5  \\
\hline
\end{array}
$$
}
\caption{Family of admissible capacities for all vectors.}\label{tab4}
\vspace{-0.5cm}
\end{table}

%We add the constraints associated to the pieces of data containing unknown values, i.e, we consider the pieces of data  
%$((2,*,3,1),2)$ and $((3,*,4,3),3)$. 
%Hence we update $\check{\mu} $ and $\hat{\mu}$. 
One can see that there is only one change compared to the family of capacities presented in Table~\ref{Tab3}, namely, we have  $\check{\mu}(\{A,C\})=2$ instead of $\check{\mu}(\{A,C\}) = 1$. This change ensures the correct global evaluations of all input vectors including $ (2, *, 3,1) $, so the use of Dragonfly algebra seems to be advantageous, but a deeper analysis is a subject of our future research. Note that the interval for admissible capacities for the set $\{A,C\}$ becomes narrower, which means that we obtained more precise knowledge of capacities that represent the data. 
%Now let us add the piece of data 
%$(f,\alpha)= ((*,*,2,*),1)$. The capacity suitable for this piece of data is $\mu$ equals to $1$ everywhere except for $\Univ$. In such a case when you update the set of capacities solution we obtain $\check{\mu} \not \leq \hat{\mu}$. There is not a family of capacities solution for the all dataset. It is not surprising because the Sugeno integral calculates a global evalution comprises between the minimum of thelocal  evaluations and the maximum of the local evaluation. \\
Finally, it is interesting that adding  $(f,\alpha)= ((*,*,2,*),*)$ to data, we obtain the unknown value of the capacity $\hat{\mu }$ for $\{C\}$, i.e.,  $\hat{\mu }(\{C\})=*$, compared to the original value equal to $2$. 
%}

\bibliographystyle{unsrt}  
\bibliography{biblio}  %%% Remove comment to use the external .bib file (using bibtex).
%%% and comment out the ``thebibliography'' section.
\end{document}